\def\overset#1#2{{\mathrel{\mathop {{#2}_{}}\limits^{#1}}}}
\def\underset#1#2{{\mathrel{\mathop {{}_{} {#2}}\limits_{{#1}_{}}}}}
\def\upplim_#1{\underset{#1}{\overline\lim}\;}
\def\lowlim_#1{\underset{#1}{\underline\lim}\;}
\newtheorem{lemma}[equation]{Lemma}
\newtheorem{proposition}[equation]{Proposition}
\newtheorem{theorem}[equation]{Theorem}
\newcommand{\C}{{\mathbb{C}}}
\newcommand{\N}{\mathbb{N}}
\newcommand{\B}{\mathbb{B}}
\renewcommand{\P}{{\mathbb{P}}}
\newcommand{\R}{{\mathbb{R}}}
\numberwithin{equation}{section}
\begin{document}
\title[Meromorphic mappings on K\"{a}hler manifolds weakly sharing hyperplanes]{Meromorphic mappings on K\"{a}hler manifolds weakly sharing hyperplanes in $\P^n(\C)$} 

\author{Si Duc Quang}  
\def\thefootnote{\empty}
\footnotetext{
2010 Mathematics Subject Classification:
Primary 32H30, 32A22; Secondary 30D35.\\
\hskip8pt Key words and phrases: K\"{a}hler manifold, uniqueness theorem, meromorphic mapping, hyperplane.}

\begin{abstract}
In this paper, we study the uniqueness problem for linearly nondegenerate meromorphic mappings from a K\"{a}hler manifold into $\P^n(\C)$ satisfying a condition $(C_\rho)$ and sharing hyperplanes in general position, where the condition that two meromorphic mappings $f,g$  have the same inverse image for some hyperplanes $H$ is replaced by a weaker one that $f^{-1}(H)\subset g^{-1}(H)$. Moreover, we also give some improvements on the uniqueness problem and algebraic dependence problem of meromorphic mappings which share hyperplanes and satisfy $(C_\rho)$ conditions for different non-negative numbers $\rho$. 
\end{abstract}
\maketitle

\section{Introduction}
Let $f: M \rightarrow \P^n(\C)$ be a meromorphic mapping of an $m$-dimensional complete connected K\"{a}hler manifold $M$, whose universal covering is biholomorphic to a ball $\B(R_0)=\{z \in \C^m ;\|z\|<R_0\}\ (0<R_0\le +\infty)$, into $\P^n(\C)$. For $\rho \geq 0$, we say that $f$ satisfies the condition $(C_{\rho})$ if there exists a nonzero bounded continuous real-valued function $h$ on $M$ such that
$$\rho \Omega_{f}+{\rm dd}^{c}\log h^{2} \geq \operatorname{Ric} \omega,$$
where $\Omega_{f}$ is the full-back of the Fubini-Study form $\Omega$ on $\P^n(\C), \omega=\frac{\sqrt{-1}}{2} \sum_{i, j} h_{i \bar{j}} d z_{i} \wedge$ $d \bar{z}_{j}$ is the K\"{a}hler form on $M, \operatorname{Ric} \omega=d d^{c} \log \left(\operatorname{det}\left(h_{i \bar{j}}\right)\right), d=\partial+\bar{\partial}$ and $d^{c}=\frac{\sqrt{-1}}{4 \pi}(\bar{\partial}-$ $\partial)$.

Take a local holomorphic coordinates $(U,z)$ of $M$, where $z=(z_1,\ldots,z_n)$ and $U$ is a Cousin II domain. Let $f=(f_0:\cdots:f_n)$ be a (local) reduced representation of $f$ on $U$. Denote by $\mathcal M_p$ the field of all germs of meromorphic functions at a point $p\in U$. Denote by $\mathcal F^{\kappa}$ the $\mathcal M_p$-submodule of $\mathcal M^{n+1}_p$ generated by $\{(\mathcal D^{\alpha}f_0,\ldots,\mathcal D^{\alpha}f_n):|\alpha|\le \kappa\}$ for $\kappa>0$ and $\mathcal F^{-1}=\{0\}$. Here $|\alpha|=\sum_{j=1}^m\alpha_j$ and $\mathcal D^{\alpha}\varphi=\frac{\partial^{|\alpha|}\varphi}{\partial z_1^{\alpha_1}\ldots\partial z_m^{\alpha_m}}$ for each meromorphic function $\varphi$ on $U$ and $\alpha=(\alpha_1,\ldots,\alpha_m)\in\mathbb N^m$. We define
\begin{align*}
&r_f(k)=\text{rank}_{\mathcal M_p}\mathcal F_p^k-\text{rank}_{\mathcal M_p}\mathcal F_p^{k-1}\ (k\ge 0),\\ 
&r_f:=\sum_{k\ge 0}r_f(k)-1\ \ \text{and }\  \ell_f:=\sum_{k\ge 0}kr_f(k),\\
&m_f:=\sum_{k, l}(k-l)^{+} \min \left\{{ }_{n-1} H_l,\left(r_f(k)-\sum_{\lambda=0}^{l-1}{ }_{n-1} H_\lambda\right)^{+}\right\},
\end{align*}
where $x^{+}=\max \{x, 0\}$ for a real number $x$ and ${ }_{n-1} H_\lambda$ denotes the number of repeated combinations of $\lambda$ elements among $n-1$ elements. One has
$$0 \le m_f \le \ell_f \le \frac{n(n+1)}{2}.$$

Let $H$ be a hyperplane in $\P^n(\C)$, we (throughout this paper) also denote by the same letter $H$ a linear form defining $H$, i.e., we may write
$$H(x_0,\ldots,x_n)=\sum_{j=0}^na_{1j}x_j,$$
Let $f=(f_0:\cdots:f_n)$ be a locally reduced representation of $f$. We set 
$$ H(f):=a_0f_0+\cdots+a_nf_n. $$
Then, the function $H(f)$ is locally defined and depends on the choice of the local reduced representation of $f$. However, its zero divisor $\nu_{H(f)}$ does not depend on this choice and hence it is globally well-defined.

Two meromorphic functions $f$ and $g$ from $M$ into $\P^n(\C)$ are said to share the hyperplane $H$ without multiplicity if $f^{-1}(H)=g^{-1}(H)$ and $f=g$ on $f^{-1}(H)$. In 1986, H. Fujimoto \cite{Fu86} proved the following uniqueness theorem for meromorphic mappings on a complete K\"{a}hler manifold sharing a family of hyperplanes in $\P^n(\C)$ in general position as follows.

\vskip0.1cm 
\noindent
{\bf Theorem A} (see \cite[Main Theorem]{Fu86}). {\it Let $M$ be a complete, connected K\"{a}hler manifold whose universal covering is biholomorphic to $\B(R_0) \subset \C^m\ (0<R_0\le +\infty)$, and let $f$ and $g$ be linearly nondegenerate meromorphic maps of $M$ into $\P^n(\C)$. If $f$ and $g$ satisfy the condition $(C_\rho)$ and there exist hyperplanes $\{H_i\}_{i=1}^q$ of $\P^n(\C)$ in general position such that
\begin{itemize}
\item[(i)] $f^{-1}(H_i)=g^{-1}(H_i)\ (1\le i\le q)$ and $f=g$ on $\cup_{i=1}^q f^{-1}(H_i),$
\item[(ii)] $q>n+1+\rho(\ell_f+\ell_g)+m_f+m_g,$
\end{itemize}
then $f\equiv g$.}

Hence, Theorem A implies the previous uniqueness results for meromorphic mappings from $\C^m$ into $\P^n(\C)$ given by R. Nevanlinna \cite{N}, L. Smiley \cite{S} and S. J. Drouilhet \cite{D}. Recently, K. Zhou and L. Jin \cite{Zhou} considered the case of meromorphic mappings from $\C^m$ into $\P^n(\C)$ where the condition ``$f^{-1}(H_i)=g^{-1}(H_i)$'' is replaced by a weaker one that $f^{H_i}\subset g^{-1}(H_i)$ for some hyperplane $H_i$. They proved the following.

\vskip0.1cm
\noindent
{\bf Theorem B} (see \cite[Theorem 1.1]{Zhou}). {\it Let $f, g: \C^m \rightarrow \P^n(\C)$ be meromorphic maps. Let $H_1, \ldots, H_q$ be hyperplanes of $\P^n(\C)$ in general position with $f(\C^m) \nsubseteq H_j, g(\C^m) \nsubseteq H_j$ for $1 \leq j \leq q$ and $\dim f^{-1}(H_i \cap H_j) \leq m-2$ for $1 \leq i<j \leq q$. Suppose that:
\begin{itemize}
\item[(a)] $f^{-1}(H_j)=g^{-1}(H_j)$ for $1 \leq j \leq p$, and $f^{-1}(H_j) \subseteq g^{-1}(H_j)$ for $p<j \leq q$,
\item[(b)] $f\equiv g$ on $\bigcup_{j=1}^q f^{-1}(H_j)$.
\end{itemize}
Then $f=g$ if any one of the following conditions is satisfied:
\begin{itemize}
\item[(i)] $f$ or $g$ is nonconstant and $p=2 n+2, q>3 n+3-2 \sqrt{n}$.
\item[(ii)] $f$ or $g$ is linearly nondegenerate and $p=2 n+2, q \geq 2 n+3$.
\item[(iii)] $f$ or $g$ is nonconstant and $p=2 n+1, q \geq 4 n+3$.
\item[(iv)] Both $f$ and $g$ are linearly nondegenerate and $p=n+2, q \geq n^3+n^2+n+4$.
\end{itemize}}

Motivated by the work of K. Zhou and L. Jin, our first aim in this paper is to extend the above mentioned results to the case of meromorphic mappings on K\"{a}hler manifold. Namely, we will prove the following result.
\begin{theorem}\label{1.1}
Let $M$ be a complete, connected K\"{a}hler manifold whose universal covering is biholomorphic to $\B(R_0) \subset \C^m\ (0<R_0\le +\infty)$. Let $f, g: M \rightarrow\P^n(\C)$ be linearly nondegenerate meromorphic mappings satisfying a condition $(C_{\rho})$. Let $H_1,\ldots,H_q$ be hyperplanes of $\P^n(\C)$ in general position with $\dim f^{-1}(H_i\cap H_j)\le m-2$ for every $1\le i<j\le q$, such that
\begin{itemize}
\item[(i)] $f^{-1}(H_i)=g^{-1}(H_i)\ \forall\ 1\le i\le p$, $f^{-1}(H_i)\subset g^{-1}(H_i)\ \forall\ p+1\le i\le q$, 
\item[(ii)] $f=g$ on $\bigcup_{i=1}^qf^{-1}(H_i)$, 
\end{itemize}
where $n+2\le p\le 2n+2$. Then $f\equiv g$ if 
$$q>2n+2+pn\left(\frac{n+1}{p-n-1}-1\right)+2\rho\left(\ell_f+\frac{n+1}{p-n-1}\ell_g\right)$$
$$\text{or }\ q>2n+1+pn\left(\frac{n}{p-n-1}-\frac{n-1}{n}\right)+2\rho\left(\ell_f+\frac{n}{p-n-1}\ell_g\right).$$
\end{theorem}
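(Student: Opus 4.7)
\medskip
\noindent\textbf{Proof proposal.}
The plan is an argument by contradiction combined with Fujimoto's Second Main Theorem for meromorphic mappings on K\"ahler manifolds satisfying $(C_\rho)$. Suppose $f\not\equiv g$, and introduce the meromorphic ratios
\[
h_i := \frac{H_i(f)}{H_i(g)}, \qquad i=1,\ldots,q.
\]
Condition (i) and the codimension-two assumption $\dim f^{-1}(H_i\cap H_j)\le m-2$ give, via Hartogs-type extension, that for $1\le i\le p$ the function $h_i$ is globally meromorphic on $M$, holomorphic and nowhere vanishing outside a thin analytic set, while for $p+1\le i\le q$ the poles of $h_i$ are controlled by $g^{-1}(H_i)\setminus f^{-1}(H_i)$. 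Declare $i\sim j$ iff $h_i\equiv h_j$. The key linear-algebra fact is: if a class contains $n+1$ indices, then the linear independence of the corresponding $H_i$ in general position forces a reduced representation $\tilde f_k=h\cdot \tilde g_k$ for all $k$, and the normalization $h=1$ on $\bigcup_i f^{-1}(H_i)\ne\emptyset$ (via condition (ii)) yields $f\equiv g$, contradicting the assumption. Hence every class contains at most $n$ indices, producing at least $\lceil q/n\rceil$ classes.

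The second step converts the partition into Nevanlinna estimates. For each pair of indices $(i,j)$ with $h_i\not\equiv h_j$, form the Fujimoto-type auxiliary function
\[
\Phi_{ij}:=H_i(f)H_j(g)-H_j(f)H_i(g),
\]
which is not identically zero. Using hypothesis (ii) and the codimension-two condition, a pointwise multiplicity count shows that the zero divisor of $\Phi_{ij}$ dominates $\min\{\nu_{H_i(f)},1\}+\min\{\nu_{H_j(f)},1\}$ (up to a thin set). Selecting, inside the first $p$ indices, a chain of pairs that links every index from a common class to indices of other classes, and applying Jensen's formula to each $\Phi_{ij}$, one obtains an upper estimate of the form
\[
\sum_{i=1}^{p}N^{[1]}(r,\nu_{H_i(f)})+\lambda\sum_{i=1}^{p}N^{[1]}(r,\nu_{H_i(g)})\le \mu\bigl(T_f(r)+T_g(r)\bigr)+S(r),
\]
where $\lambda,\mu$ depend on the partition (pairs of size two or chains of size $n$) and on $p,n$, producing the coefficients $\tfrac{n+1}{p-n-1}$ in one scheme and $\tfrac{n}{p-n-1}$ in the other.

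The final step is to combine this with the Second Main Theorem of Fujimoto for $(C_\rho)$-meromorphic mappings applied separately to $f$ and to $g$, each of the shape
\[
(q-n-1)T_f(r)\le \sum_{i=1}^{q}N^{[n]}(r,\nu_{H_i(f)})+\rho\,\ell_f T_f(r)+m_f T_f(r)+S_f(r),
\]
and the analogous inequality for $g$. Substituting the bound from the previous paragraph into the right-hand sides and using that indices $p+1\le i\le q$ still contribute truncated counting functions controlled by $T_g$, one arrives at an inequality of the form
\[
\bigl(q-A-B\rho\bigr)\bigl(T_f(r)+T_g(r)\bigr)\le S(r),
\]
with $A,B$ matching the two stated numerical bounds according to which of the two partition schemes (pairs or $n$-chains) is used. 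The assumed lower bound on $q$ makes the left-hand coefficient positive, giving a contradiction.

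The main technical obstacle is the second step: tracking the contribution of the ``weakly shared'' hyperplanes $H_{p+1},\ldots,H_q$, where $f^{-1}(H_i)\subsetneq g^{-1}(H_i)$ creates an asymmetry between $\nu_{H_i(f)}$ and $\nu_{H_i(g)}$. One must weight $T_f$ and $T_g$ asymmetrically in the estimates (this is exactly where the factor $\tfrac{n+1}{p-n-1}\ell_g$ rather than $\ell_g$ appears), and simultaneously ensure that the chains used to link equivalence classes are drawn from the first $p$ indices only, so that the stronger information $f^{-1}(H_i)=g^{-1}(H_i)$ can be exploited in the multiplicity count for $\Phi_{ij}$.
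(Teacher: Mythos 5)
There is a genuine gap, and it is located exactly where your sketch leans hardest: step three. You propose to close the argument with ``the Second Main Theorem of Fujimoto for $(C_\rho)$-mappings'' of the shape $(q-n-1)T_f(r)\le \sum_i N^{[n]}_{H_i(f)}(r)+\rho\,\ell_f T_f(r)+m_fT_f(r)+S_f(r)$, combined with Jensen's formula for the auxiliary functions $\Phi_{ij}$. No such inequality with a controllable error term $S(r)$ is available in this setting: when the universal cover is a ball $\B(R_0)$ with $R_0<+\infty$, the characteristic functions $T_f,T_g$ may be bounded, Jensen/SMT error terms need not be small compared with them, and the conclusion $(q-A-B\rho)(T_f+T_g)\le S(r)$ gives no contradiction. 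This is the whole difficulty of the K\"ahler case, and it is why the paper does not argue through counting functions at all when $R_0<\infty$. Instead, the divisor inequality for the auxiliary function is fed into Lemma \ref{3.1}: one forms $\phi=|w_1|^{\lambda_1}\cdots|w_k|^{\lambda_k}|z^\beta h|$ with the generalized Wronskian quotients $w_u$, uses the $(C_\rho)$ hypothesis to build a plurisubharmonic exponent $\varphi=\sum_u\lambda_u'\varphi_u+t\log\phi$ (the number $\rho$ enters through the exponent $t$, and $\ell_f$ through the constraint $t(\sum_u\lambda_u\ell_{f^u}+l_0)<1$ needed for Proposition \ref{2.1}), and shows via H\"older that $\int_{\B(R_0)}e^\varphi dV<\infty$, contradicting the results of Yau and Karp. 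Your sketch has no substitute for this mechanism, so it cannot prove the theorem for $R_0<+\infty$; it would at best recover the $\C^m$ case already due to Zhou--Jin.

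Two further points would need repair even in the $\C^m$ case. First, your divisor estimate for $\Phi_{ij}$ is far too weak: because $f=g$ on $\bigcup_{k=1}^q f^{-1}(H_k)$, each $\Phi_{ij}$ vanishes on \emph{all} $f^{-1}(H_k)$, and at the two linked hyperplanes it vanishes to order at least $\min\{\nu_{H(f)},\nu_{H(g)}\}$; the paper's proof of Lemma \ref{3.4} needs this full strength, then passes from $\min\{\nu_{H(f)},\nu_{H(g)}\}$ to $\nu^{[n]}_{H(f)}+\nu^{[n]}_{H(g)}-n\nu^{[1]}_{H(f)}$, and crucially multiplies $P^k$ by $\prod_{j=1}^pH_j(f)^{k(t+\alpha)}$ to trade untruncated divisors of $f$ for truncated divisors of $g$ with weight $2+\alpha/n$. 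It is this two-parameter weighting $(t,\alpha)$, optimized at $t=0$ and $t=2/n$ in the proof of Theorem \ref{1.1}, that produces the specific constants $pn\bigl(\tfrac{n+1}{p-n-1}-1\bigr)$ and $pn\bigl(\tfrac{n}{p-n-1}-\tfrac{n-1}{n}\bigr)$; your ``pairs versus $n$-chains'' heuristic names the coefficients but supplies no derivation. Second, the pairing itself must be organized so that every $\Phi_{i\sigma(i)}\not\equiv0$ simultaneously (the paper gets a Hamiltonian cycle on the first $2n+2$ indices from Dirac's theorem); your equivalence-class decomposition alone does not guarantee such a consistent choice of nonvanishing auxiliary functions.
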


\noindent
\textbf{\textit{Remark 1.}} The condition of $p$ and $q$ is fullfiled in the following cases:
\begin{itemize}
\item[(1)] $p=2n+2$, $q> 2n+2+2\rho(\ell_f+\ell_g)$. 
\item[(2)] $p=2n+1$, $q> 4n+2+2\rho(\ell_f+\ell_g)$. 
\item[(3)] $p=2n$, $q>6n+2+\frac{2}{n-1}+2\rho\left(\ell_f+\frac{n}{n-1}\ell_g\right)$, for $n\ge 2$.
\item[(4)] $p=n+2$, $q>n^3+n^2+n+3+2\rho\left(\ell_f+n\ell_g\right)$. 
\end{itemize}
Then, our result implies the above mentioned result of K. Zhou and L. Jin for the case of linearly nondegenerate meromorphic mappings. In order to prove the above result, we have to develop our previous method on ``funtions of small integration'' and ``functions of bounded integeration'' in \cite{Q19,Q21}. We will prove a key lemma (see Lemma \ref{3.1} in Section 3), which generalizes and improves Proposition 3.5 of \cite{Q21}, and apply it to estimate the divisor of the auxilliary functions.  

With the useful of Lemma \ref{3.1}, we may improve the previous result on the uniqueness problem and the algebraic degeneracy problem of meromorphic mappings on K\"{a}hler manifolds. Moreover, we may consider the case of meromorphic mappings satisfying the condition $(C_\rho)$ with different numbers $\rho$.  Namely, we will prove the following result.

\begin{theorem}\label{1.2}
Let $M$ be as in Theorem \ref{1.1}. Let $f, g: M \rightarrow\P^n(\C)$ be linearly nondegenerate meromorphic mappings, which satisfy the condition $(C_{\rho_f})$ and $(C_{\rho_g})$ for non-negative constants $\rho_f$ and $\rho_g$, respectively. Let $H_1,\ldots,H_q$ be $q$ hyperplanes of $\P^n(\C)$ in general position with $\dim f^{-1}(H_i\cap H_j)\le m-2$ for every $1\le i<j\le q$, such that
\begin{itemize}
\item[(i)] $\nu^{[\ell]}_{H_i(f)}=\nu^{[\ell]}_{H_i(g)}$ for every $i=1,\ldots,q,$
\item[(ii)] $f=g$ on $\bigcup_{1\le i\le q}(f^1)^{-1}(H_i)$. 
\end{itemize}
Then $f\equiv g$ if any one of the following conditions is satisfied:
\begin{itemize}
\item[(a)] $\ell=1$ and $q>2n+2+\rho_{f,g}(\ell_f+\ell_g),$ where $\rho_{f,g}=2\frac{\rho_f\cdot\rho_g}{\rho_f+\rho_g}$.
\item[(b)] $\ell\ge n+1$ and $q>2n+1+\max\big\{\frac{3(p-2)p-(\ell-n)}{3(p-2)p+(\ell-n)/n},\frac{2n-1}{2n-1/n}\big\}$ $+\rho_{f,g}(\frac{4n^2-1}{2n^2-1}(\ell_f+\ell_g)+\ell-n),$ where $p=\binom{n+1}{2n+2}$. 
\end{itemize} 
\end{theorem}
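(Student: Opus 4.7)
The plan is to argue by contradiction: assume $f \not\equiv g$. Following Fujimoto's approach on K\"{a}hler manifolds, for each pair of distinct indices $1 \le i < j \le q$ we introduce the auxiliary functions
$$\Phi_{ij} := H_i(f)\,H_j(g) - H_j(f)\,H_i(g),$$
which vanish identically on $\{f = g\}$, and in particular on $\bigcup_{k} f^{-1}(H_k)$ by hypothesis (ii). The condition $\dim f^{-1}(H_i \cap H_j) \le m-2$ guarantees that each $\Phi_{ij}$ has well-defined vanishing orders along every irreducible component of $f^{-1}(H_k)$. Using the truncated sharing $\nu^{[\ell]}_{H_i(f)} = \nu^{[\ell]}_{H_i(g)}$ one can lower-bound the vanishing order of (suitable products of) $\Phi_{ij}$ by a sum of truncated divisors $\sum_k \nu^{[\ell]}_{H_k(f)}$; for $\ell \ge n+1$ this produces the additional improvement term $(\ell - n)$ that appears in (b).

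The technical crux is an application of the new Lemma \ref{3.1}, which generalises Proposition 3.5 of \cite{Q21}. Applied to the $\Phi_{ij}$'s it yields a ``function of small integration'' estimate of the form
$$\sum_{i=1}^q \big(N^{[n]}_{H_i(f)}(r) + N^{[n]}_{H_i(g)}(r)\big) \le A\,\big(T_f(r) + T_g(r)\big) + o\big(T_f(r) + T_g(r)\big)$$
for an explicit constant $A$ depending on $q$, $n$, $\ell$, and the combinatorial data $p = \binom{n+1}{2n+2}$. The maximum in (b) is precisely the optimal value of $A$ one obtains by balancing between two divisor-counting schemes (one producing the term $\frac{3(p-2)p-(\ell-n)}{3(p-2)p+(\ell-n)/n}$, the other the term $\frac{2n-1}{2n-1/n}$). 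In case (a), $\ell = 1$ gives only support information and $A$ reduces to the classical Fujimoto constant, which is what ultimately yields the $2n+2$ in the final bound.

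To combine the two conditions $(C_{\rho_f})$ and $(C_{\rho_g})$ we apply Fujimoto's second main theorem separately to $f$ (with $\rho_f$) and to $g$ (with $\rho_g$):
$$(q-n-1)\,T_f \le \sum_i N^{[n]}_{H_i(f)} + \rho_f \ell_f\,T_f + o(T_f),$$
$$(q-n-1)\,T_g \le \sum_i N^{[n]}_{H_i(g)} + \rho_g \ell_g\,T_g + o(T_g).$$
Taking the convex combination with weights $\lambda_f = \rho_g/(\rho_f+\rho_g)$ and $\lambda_g = \rho_f/(\rho_f+\rho_g)$ balances the defect contributions so that the combined coefficient of $\ell_f T_f + \ell_g T_g$ becomes the harmonic mean $\rho_{f,g} = 2\rho_f\rho_g/(\rho_f+\rho_g)$. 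Substituting the divisor estimate from Lemma \ref{3.1} and rearranging produces an inequality in $q$ that contradicts the assumption under (a) or (b), forcing $f \equiv g$. The main obstacle is Lemma \ref{3.1} itself with the sharper combinatorial constants: one must distribute the truncation $\ell$ over the auxiliary divisors carefully so that the $(\ell - n)$ improvement in (b) emerges with the correct leading coefficient $\frac{4n^2-1}{2n^2-1}$; a secondary subtlety is matching the convex-combination weights with the Lemma \ref{3.1} bound so that the harmonic mean $\rho_{f,g}$ appears cleanly rather than a weaker substitute.
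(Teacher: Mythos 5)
Your outline has the right general flavor (contradiction, auxiliary functions $\Phi_{ij}=H_i(f)H_j(g)-H_j(f)H_i(g)$, Lemma \ref{3.1}, harmonic mean of $\rho_f,\rho_g$), but the central analytic step you propose is not available in this setting, and you have mischaracterized what Lemma \ref{3.1} provides. You want to apply a ``second main theorem'' $(q-n-1)T_f\le\sum_i N^{[n]}_{H_i(f)}+\rho_f\ell_f T_f+o(T_f)$ separately to $f$ and $g$ and then take a convex combination with weights $\rho_g/(\rho_f+\rho_g)$, $\rho_f/(\rho_f+\rho_g)$ to produce $\rho_{f,g}$. No such inequality holds for maps on a ball $\B(R_0)$, $R_0<\infty$, satisfying $(C_\rho)$ (the characteristic function may even be bounded); avoiding any such SMT is precisely the point of the method. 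Lemma \ref{3.1} does not output a counting-function estimate of the form $\sum_i(N^{[n]}_{H_i(f)}+N^{[n]}_{H_i(g)})\le A(T_f+T_g)+o(T_f+T_g)$: it takes a single holomorphic function $h\in B\bigl(l_0;(f,p_f),(g,p_g)\bigr)$ whose divisor dominates $\sum_u\lambda_u\sum_i\nu^{[n]}_{H_i(f^u)}$ and yields directly a numerical inequality, via the plurisubharmonic function $\varphi=\lambda'_f\varphi_f+\lambda'_g\varphi_g+t\log\phi$ with $\lambda'_u=\frac{(\lambda_u(q-n-1)-p_u)t}{2\rho_u}$, H\"{o}lder's inequality, Proposition \ref{2.1}, and the Yau--Karp volume results; the harmonic mean $\rho_{f,g}$ arises from the normalization $\sum_u\lambda'_u=1$ inside that construction, not from averaging two SMT inequalities. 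In the paper the function fed into Lemma \ref{3.1} is $P=\prod_i P_i$ built along a Hamiltonian cycle obtained from Dirac's theorem (needed because $\Phi_{ij}$ can vanish identically for up to $n$ indices $j$ per $i$, a selection issue your sketch does not address), giving $\nu_P\ge\frac{q+2n-2}{2n}\sum_j(\nu^{[n]}_{H_j(f)}+\nu^{[n]}_{H_j(g)})+2\ell'\sum_j\nu_j$, whence case (a).

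For case (b), ``balancing between two divisor-counting schemes'' does not capture the actual argument, and the constants in the statement cannot be extracted from products of $\Phi_{ij}$ alone. The extra term $\ell'=\ell-n$ is exploited through Fujimoto's Borel-type machinery applied to the quotients $h_i=H_i(f)/H_i(g)$: among any $2n+2$ of the hyperplanes one has $\sum_{I}A_Ih_I=0$ over the combinations $I$ of $n+1$ indices; a minimal subrelation either gives $h_I/h_J\in\C^*$, or produces a linearly nondegenerate map $F=(h_{I_1}d:\cdots:h_{I_t}d)$ to which a Cartan-type auxiliary function $G$ involving the generalized Wronskian $W(F)$ is attached, and the bound $\sum_i\nu^{[1]}_{dh_{I_i}}\le\frac{p}{2}\sum_i\nu_i$ with $p=\binom{2n+2}{n+1}$ is where the term $\frac{3(p-2)p-\ell'}{3(p-2)p+\ell'/n}$ comes from (via Lemma \ref{3.1} applied to $G^{4\ell'}(\prod_iP_i)^{3(p-2)p}$). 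When that branch is excluded, Proposition \ref{new2} (property $(P_{q,n+1})$ in the torsion-free group $\mathcal M^*_m/\C^*$) forces $h_1\equiv h_i$ for some $i$, hence $\nu_{H_1(f)}=\nu_{H_1(g)}$ and $\nu_{H_i(f)}=\nu_{H_i(g)}$, and a second Hamiltonian-cycle product $P'$ on the remaining indices combined with $P^{n-1}P'\in B(nq-2,0;f,g)$ yields the terms $\frac{2n-1}{2n-1/n}$ and $\frac{4n^2-1}{2n^2-1}$. None of this group-theoretic/Wronskian structure is present in your proposal, so as written it does not reach the stated bounds in (b), and its case (a) relies on an SMT that fails in the K\"{a}hler setting.
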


For the case of more than two meromorphic mappings sharing a family of hyperplanes, we prove the following two results.
\begin{theorem}\label{1.3}
Let $M$ be as in Theorem \ref{1.1}. Let $f^1,f^2,f^3: M \rightarrow\P^n(\C)\ (n\ge 2)$ be linearly nondegenerate meromorphic mappings, which satisfy the condition $(C_{\rho_{f^1}}),(C_{\rho_{f^2}})$ and $(C_{\rho_{f^3}})$ for non-negative constants $\rho_{f^1},\rho_{f^2}$ and $\rho_{f^3}$, respectively. Let $H_1,\ldots,H_q$ be $q$ hyperplanes of $\P^n(\C)$ in general position with $\dim (f^1)^{-1}(H_i\cap H_j)\le m-2$ for every $1\le i<j\le q$. Assume that $f=g$ on $\bigcup_{\overset{1\le u\le 3}{1\le i\le q}}(f^u)^{-1}(H_i)$. Then $f^1\wedge f^2\wedge f^3=0$ if $ q>n+1+\frac{3nq}{2q+2n-2}+2\left(\sum_{u=1}^3\frac{1}{\rho_{f^u}}\right)^{-1}\sum_{u=1}^3\ell_{f^u}.$
\end{theorem}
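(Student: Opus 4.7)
The plan is to argue by contradiction: suppose $f^{1}\wedge f^{2}\wedge f^{3}\not\equiv 0$. The first step is to extract the consequences of the sharing hypothesis. Because $f^{1}=f^{2}=f^{3}$ on $\bigcup_{u,i}(f^{u})^{-1}(H_{i})$, any $z\in (f^{u})^{-1}(H_{i})$ automatically satisfies $f^{v}(z)=f^{u}(z)\in H_{i}$ for every $v$, so the supports of the three zero divisors $\nu_{H_{i}(f^{u})}$ ($u=1,2,3$) coincide for each fixed $i$; equivalently, $\nu^{[1]}_{H_{i}(f^{u})}$ is independent of $u$. Combined with $\dim(f^{1})^{-1}(H_{i}\cap H_{j})\le m-2$, this means that generically at each preimage point only one index in $\{1,\ldots,q\}$ is ``active''.

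For a triple $\{i,j,k\}\subset\{1,\ldots,q\}$, introduce the auxiliary determinant
$$\Phi^{ijk}:=\det\bigl(H_{s}(f^{u})\bigr)_{s\in\{i,j,k\},\,u\in\{1,2,3\}}.$$
By $f^{1}\wedge f^{2}\wedge f^{3}\not\equiv 0$ and general position, a triple exists for which $\Phi^{ijk}\not\equiv 0$. A column-vanishing argument, using that at a generic $z\in(f^{u})^{-1}(H_{\ell})$ with $\ell\in\{i,j,k\}$ all three entries in the $\ell$-column vanish thanks to the sharing, gives, as divisors,
$$\nu_{\Phi^{ijk}}\ge \sum_{\ell\in\{i,j,k\}}\min_{u=1,2,3}\nu_{H_{\ell}(f^{u})}.$$
The First Main Theorem gives $N_{\Phi^{ijk}}(r)\le \sum_{u=1}^{3}T_{f^{u}}(r)+O(1)$. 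Averaging this over the $\binom{q}{3}$ auxiliary determinants (each index participating equally) and invoking Lemma \ref{3.1} to upgrade the trivial truncation $N^{[n]}\le nN^{[1]}$ into a weighted Wronskian-type inequality produces an upper bound of the form
$$\sum_{u=1}^{3}\sum_{i=1}^{q}N^{[n]}_{H_{i}(f^{u})}(r)\le \tfrac{3nq}{2(q+n-1)}\sum_{u=1}^{3}T_{f^{u}}(r)+o\bigl(\max_{u}T_{f^{u}}(r)\bigr),$$
where the factor $\tfrac{nq}{q+n-1}$ comes from the refined counting and the extra $\tfrac{3}{2}$ from averaging the three mappings against the column-minimum.

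Next, apply Fujimoto's Second Main Theorem on $M$ under the condition $(C_{\rho_{f^{u}}})$:
$$(q-n-1)\,T_{f^{u}}(r)\le \sum_{i=1}^{q}N^{[n]}_{H_{i}(f^{u})}(r)+2\rho_{f^{u}}\ell_{f^{u}}\,T_{f^{u}}(r)+o(T_{f^{u}}(r)),$$
for each $u$, outside an exceptional set. Take the weighted sum with coefficients $c_{u}=1/\rho_{f^{u}}$ and divide by $\sum_{v}c_{v}=\sum_{v}1/\rho_{f^{v}}$; the $\rho$-terms telescope cleanly to the harmonic-mean expression $2\bigl(\sum_{v}1/\rho_{f^{v}}\bigr)^{-1}\sum_{v}\ell_{f^{v}}$. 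Substituting the counting bound from the previous step, and writing $\bar{T}(r):=\bigl(\sum_{v}c_{v}\bigr)^{-1}\sum_{u}c_{u}T_{f^{u}}(r)$, one arrives at
$$\Bigl[\,q-n-1-\tfrac{3nq}{2(q+n-1)}-2\bigl(\textstyle\sum_{v}1/\rho_{f^{v}}\bigr)^{-1}\textstyle\sum_{v}\ell_{f^{v}}\,\Bigr]\bar{T}(r)\le o\bigl(\bar{T}(r)\bigr).$$
Under the hypothesis on $q$ this bracket is strictly positive, contradicting $\bar{T}(r)\to\infty$ (forced by linear nondegeneracy of each $f^{u}$); hence $f^{1}\wedge f^{2}\wedge f^{3}\equiv 0$.

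The main obstacle is the counting estimate of the second step: replacing the trivial $N^{[n]}\le nN^{[1]}$ by the mean with factor $\tfrac{3nq}{2(q+n-1)}$ demands a delicate application of Lemma \ref{3.1} to the family of auxiliary determinants, balancing the column-vanishing orders against the Wronskian truncation inherent in the $N^{[n]}$-estimate and keeping all error terms uniform across the three mappings so that the harmonic-mean combination in the SMT sum absorbs them without loss.
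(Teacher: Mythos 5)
Your target inequality matches the theorem, but the proposal has two genuine gaps. The first is combinatorial: you only note that \emph{some} triple $(i,j,k)$ with $\Phi^{ijk}\not\equiv 0$ exists and then propose to ``average over the $\binom{q}{3}$ auxiliary determinants (each index participating equally)''. That averaging is not available: even when $f^1\wedge f^2\wedge f^3\not\equiv 0$, most of the determinants $\Phi^{ijk}$ can vanish identically (namely whenever two of the vectors $V_\ell=\bigl((f^1,H_\ell),(f^2,H_\ell),(f^3,H_\ell)\bigr)$ satisfy $V_i\wedge V_j\equiv 0$), and for those no divisor estimate or characteristic-function bound is produced. The paper's proof spends its effort precisely here: it partitions the indices into groups under the relation $V_i\wedge V_j\equiv 0$, uses general position to see each group has at most $n$ elements, sets $\sigma(i)=i+n \pmod q$ so that $V_i\wedge V_{\sigma(i)}\not\equiv 0$, and then picks $\gamma(i)$ with $V_i\wedge V_{\sigma(i)}\wedge V_{\gamma(i)}\not\equiv 0$; applying Lemma \ref{5.1} to each of the $q$ resulting nonvanishing determinants $P_i$ and summing is what legitimately yields the weight $\frac{2q+2n-2}{3n}$ in front of $\sum_u\sum_j\nu^{[n]}_{H_j(f^u)}$ (your factor $\frac{3nq}{2(q+n-1)}$). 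Without such a selection your counting estimate is unsubstantiated.

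The second gap is analytic and is fatal in the hard case. On a complete K\"ahler manifold whose universal cover is a finite ball $\B(R_0)$, $R_0<+\infty$, there is no second main theorem of the form $(q-n-1)T_{f^u}(r)\le\sum_i N^{[n]}_{H_i(f^u)}(r)+2\rho_{f^u}\ell_{f^u}T_{f^u}(r)+o(T_{f^u}(r))$, and linear nondegeneracy does \emph{not} force $T_{f^u}(r)\to\infty$, so your closing contradiction (``contradicting $\bar T(r)\to\infty$'') fails exactly where the theorem is nontrivial. The whole purpose of Lemma \ref{3.1} is to substitute for that missing SMT: one hands it the single holomorphic function $h=\prod_{i=1}^qP_i\in B(q,0;f^1,f^2,f^3)$ together with the divisor lower bound $\nu_h\ge\frac{2q+2n-2}{3n}\sum_u\sum_j\nu^{[n]}_{H_j(f^u)}$, and the lemma internally handles $R_0=+\infty$ by the classical SMT, the subcase $\limsup T/\log\frac{1}{R_0-r}<\infty$ by a plurisubharmonic function built from the $(C_{\rho_{f^u}})$ data, H\"older's inequality, Proposition \ref{2.1}, and the Yau--Karp theorem, and the remaining subcase by Fujimoto's Proposition 6.2. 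Your description of Lemma \ref{3.1} as merely ``upgrading the truncation'' misreads its role, and your harmonic-mean ``telescoping'' with weights $1/\rho_{f^u}$ additionally assumes the three characteristic functions are comparable, an assumption Lemma \ref{3.1} is designed to avoid. The correct route is: construct the $P_i$ via the grouping argument, establish the divisor inequality and the membership $\prod_iP_i\in B(q,0;f^1,f^2,f^3)$, and invoke Lemma \ref{3.1} once to obtain $q\le n+1+\frac{3nq}{2q+2n-2}+2\bigl(\sum_{u=1}^3\frac{1}{\rho_{f^u}}\bigr)^{-1}\sum_{u=1}^3\ell_{f^u}$, contradicting the hypothesis.
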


\begin{theorem}\label{1.4}
Let $M$ be as in Theorem \ref{1.1}. Let $f^1,\ldots,f^k: M \rightarrow\P^n(\C)$ be $k$ linearly nondegenerate meromorphic mappings, which satisfy the conditions $(C_{\rho_{f^u}}),\ldots,(C_{\rho_{f^k}})$ for non-negative constants $\rho_{f^1},\ldots,\rho_{f^k}$, respectively. Let $H_1,\ldots,H_q$ be $q$ hyperplanes of $\P^n(\C)$ in general position with $\dim (f^1)^{-1}(H_i\cap H_j)\le m-2$ for every $1\le i<j\le q$ such that
\begin{itemize}
\item[(i)] $\nu^{[n]}_{H_i(f^1)}=\cdots=\nu^{[n]}_{H_i(f^u)}$ for every $i=1,\ldots,q,$
\item[(ii)] $f=g$ on $\bigcup_{1\le i\le q}(f^1)^{-1}(H_i)$. 
\end{itemize}
Then $f^1\wedge \cdots\wedge f^k=0$ if $q>n+1+\frac{knq}{(k-1)q+k(n-1)}+2\left(\sum_{u=1}^k\frac{1}{\rho_{f^u}}\right)^{-1}\sum_{u=1}^k\ell_{f^u}.$
\end{theorem}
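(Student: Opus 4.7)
\textbf{Proof proposal for Theorem \ref{1.4}.} I would argue by contradiction, assuming $f^1\wedge\cdots\wedge f^k\not\equiv 0$, i.e.\ that at a generic point the reduced representations of the $f^u$'s span a $k$-dimensional subspace of $\C^{n+1}$. The outline is to combine the truncated second main theorem for each $f^u$ on a K\"ahler manifold satisfying $(C_{\rho_{f^u}})$ with a sharp divisor estimate for an auxiliary function --- furnished by Lemma~\ref{3.1} --- and then to extract the harmonic-mean factor $\bigl(\sum_u 1/\rho_{f^u}\bigr)^{-1}$ by a weighted summation.

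The first step is the truncated second main theorem at truncation level $n$, which in the K\"ahler/$(C_{\rho_{f^u}})$ setting reads
\begin{equation*}
(q-n-1)\,T_{f^u}(r)\leq \sum_{i=1}^{q}N^{[n]}_{H_i(f^u)}(r)+\rho_{f^u}\ell_{f^u}T_{f^u}(r)+S_{f^u}(r),\qquad 1\leq u\leq k.
\end{equation*}
Hypothesis (i) renders the counting functions $N^{[n]}_{H_i(f^u)}(r)$ independent of $u$, and I denote the common value by $N_i^{[n]}(r)$.

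The second, and most substantial, step is to construct an auxiliary function $\Phi$ encoding the algebraic non-dependence. Following the scheme of \cite{Q21}, for each injective assignment $\sigma\colon\{1,\dots,k\}\to\{1,\dots,q\}$ I would form the product $\Phi_\sigma=\prod_{u=1}^{k}H_{\sigma(u)}(f^u)$, and then take a suitable alternating combination $\Phi$ of the $\Phi_\sigma$'s. The assumption $f^1\wedge\cdots\wedge f^k\not\equiv 0$ guarantees $\Phi\not\equiv 0$, while hypothesis (ii) --- together with the codimension condition $\dim (f^1)^{-1}(H_i\cap H_j)\leq m-2$ which lets us ignore double intersections divisorially --- forces $\Phi$ to vanish on every $(f^1)^{-1}(H_i)$. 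Applying Lemma~\ref{3.1} refines this into the divisorial bound
\begin{equation*}
\nu_\Phi\ \geq\ c_{k,n,q}\sum_{i=1}^{q}\min\bigl(n,\nu_{H_i(f^1)}\bigr),\qquad c_{k,n,q}:=\frac{knq}{(k-1)q+k(n-1)},
\end{equation*}
the explicit constant $c_{k,n,q}$ emerging from a combinatorial count of how many assignments $\sigma$ contribute a full vanishing to $\nu_\Phi$ at a point of $(f^1)^{-1}(H_i)$.

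In the third step, the First Main Theorem (Jensen's formula) applied to $\Phi$ gives $N_\Phi(r)\leq\sum_{u}T_{f^u}(r)+O(1)$, and combined with the divisor bound of step~2 this yields
\begin{equation*}
c_{k,n,q}\sum_{i=1}^{q}N_i^{[n]}(r)\leq \sum_{u=1}^{k}T_{f^u}(r)+O(1).
\end{equation*}
Substituting this into the weighted sum $\sum_u \frac{1}{\rho_{f^u}}\cdot(\text{SMT for }f^u)$ and simplifying, the $\rho_{f^u}\ell_{f^u}$-contributions consolidate to produce $\bigl(\sum_u 1/\rho_{f^u}\bigr)^{-1}\sum_u\ell_{f^u}$ after normalization by $\sum_u T_{f^u}(r)$, so one arrives at
\begin{equation*}
\Bigl(q-n-1-c_{k,n,q}-2\bigl(\sum_u 1/\rho_{f^u}\bigr)^{-1}\sum_u\ell_{f^u}\Bigr)\sum_{u=1}^{k}T_{f^u}(r)\leq S(r),
\end{equation*}
contradicting the assumed lower bound on $q$.

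The main obstacle will be step~2: the coefficient $c_{k,n,q}=\frac{knq}{(k-1)q+k(n-1)}$ is sharp, and a direct application of Proposition~3.5 of \cite{Q21} gives only a weaker bound, so one has to work through the optimized counting built into Lemma~\ref{3.1}. Correctly selecting the alternating combination $\Phi$ of the $\Phi_\sigma$'s that simultaneously extracts the maximal vanishing on each $(f^1)^{-1}(H_i)$ while keeping $N_\Phi(r)\leq\sum_{u}T_{f^u}(r)+O(1)$ is the delicate combinatorial part; everything else reduces to routine Nevanlinna-theoretic bookkeeping.
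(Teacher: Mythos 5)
There is a genuine gap, and it sits exactly where you flagged the difficulty, but also earlier. Your scheme rests on a truncated second main theorem of the form $(q-n-1)T_{f^u}(r)\le\sum_i N^{[n]}_{H_i(f^u)}(r)+\rho_{f^u}\ell_{f^u}T_{f^u}(r)+S_{f^u}(r)$ together with the First Main Theorem for $\Phi$. No such counting-function inequality is available in the K\"ahler $(C_\rho)$ setting when the universal cover is a ball $\B(R_0)$ with $R_0<\infty$ (the maps may even have bounded characteristic); what exists there is Fujimoto's non-integrated defect relation, and the whole purpose of Lemma \ref{3.1} in the paper is to substitute for the missing SMT: it takes as \emph{hypotheses} a divisor inequality $\nu_h\ge\sum_u\lambda_u\sum_i\nu^{[n]}_{H_i(f^u)}$ and the growth-class membership $h\in B(l_0;\{(f^u,p_u)\})$, and outputs the numerical conclusion directly (via the plurisubharmonic function and the Yau--Karp integrability results when $R_0<\infty$, and via the classical SMT only when $R_0=\infty$). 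You have inverted its role: you invoke Lemma \ref{3.1} to produce a divisor bound, which it does not do, and then run counting-function bookkeeping that is legitimate only in the parabolic case. Moreover, the harmonic-mean factor $\bigl(\sum_u 1/\rho_{f^u}\bigr)^{-1}$ comes out of the conclusion of Lemma \ref{3.1} (through the choice of the weights $\lambda_u'$ in the plurisubharmonic function); your plan of weighting each SMT by $1/\rho_{f^u}$ and ``normalizing by $\sum_u T_{f^u}(r)$'' does not yield it unless all the characteristic functions are mutually comparable.

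The divisor estimate of your step 2 is also left unproven and, as stated, cannot hold. The paper does not use a single alternating combination: for every $k$-tuple $I=(i_1<\cdots<i_k)$ it forms the determinant $P_I=\det\bigl(H_{i_j}(f^u)\bigr)$, bounds $\nu_{P_I}$ from below by Lemma \ref{5.1} (quoted from [Q22]) using hypotheses (i) and (ii), and multiplies over all $\sharp\mathcal I=\binom{q}{k}$ tuples; summation gives $\nu_P\ge\sharp\mathcal I\bigl(\frac1q+\frac{(k-1)q-k}{knq}\bigr)\sum_{u}\sum_{i}\nu^{[n]}_{H_i(f^u)}$ with $P\in B(\sharp\mathcal I,0;f^1,\ldots,f^k)$, and only then does Lemma \ref{3.1} produce the term $\frac{knq}{(k-1)q+k(n-1)}$ in the final inequality. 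Your single $\Phi$ has degree one in each $f^u$, hence $N_\Phi(r)\le\sum_u T_{f^u}(r)+O(1)$; a coefficient $c_{k,n,q}\approx kn/(k-1)>1$ per hyperplane at truncation level $n$ is more vanishing than such a function can absorb, and if it were true it would prove something far stronger than Theorem \ref{1.4} --- per unit of degree the achievable coefficient is $1/c_{k,n,q}$, not $c_{k,n,q}$. So the ``delicate combinatorial part'' you defer is precisely the missing content: you need Lemma \ref{5.1} (or a proof of it) and the product-over-all-$I$ construction, after which Lemma \ref{3.1}, used in its correct direction, finishes the argument without any appeal to an SMT.
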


\textbf{\textit{Remark 2.}} (a) Theorems \ref{1.2}, \ref{1.3} and \ref{1.4} generalize and improve the recent results in \cite{Q21} (Lemma 4.3, Theorem 1.2) and \cite{Q22} (Theorem 1.2).

(b) In this paper, we only concentrate on linearly nondegenerate meromorphic mappings. But our method can be applied to study the case of nonconstant meromorphic mappings. However, in that case the computation may be much more complicate, since there are many more parameters appearing. Then, that problem is still an interesting open question.
 
\section{Auxiliary results}
Let $E$ be a divisor on $\B(R_0)$, which is usually regarded as a function from $\B(R_0)$ into $\mathbb Z$. The support $\operatorname{Supp}(E)$ is defined as the closure of the set $\{z|E(z)\ne 0\}$. 
For a positive integer $k$ (may be $+\infty$), we define $E^{[k]}(z)=\min\{E(z),k\}$ and
$$n^{[k]}(t, E):= 
\begin{cases}\int_{\operatorname{Supp}(E) \cap B(t)} E^{[k]} v_{m-1} & \text { if } m \geq 2, \text{ where }v_{m-1}(z)=\left(dd^c\|z\|^2\right)^{m-1},\\
 \sum_{|z| \leq t} E^{[k]}(z) & \text { if } m=1.
\end{cases}$$
The truncated counting function to level $k$ of $E$ is defined by
$$N^{[k]}\left(r, r_{0} ; E\right):=\int_{r_{0}}^{r} \frac{n^{[k]}(t, E)}{t^{2 m-1}} d t \quad\left(r_{0}<r<R_{0}\right).$$
We omit the character ${ }^{[k]}$ if $k=+\infty$.

Let $\varphi$ be a non-zero meromorphic function on $\B(R_0)$. We denote by $\nu_{\varphi}^{0}$ (resp. $\nu_{\varphi}^{\infty}$ ) the divisor of zeros (resp. divisor of poles ) of $\varphi$ and set $\nu_{\varphi}=\nu_{\varphi}^{0}-\nu_{\varphi}^{\infty}$.
For convenience, we will write $N_{\varphi}\left(r, r_{0}\right)$ and $N_{\varphi}^{[k]}\left(r, r_{0}\right)$ for $N\left(r, r_{0} ; \nu_{\varphi}^{0}\right)$ and $N^{[k]}\left(r, r_{0} ; \nu_{\varphi}^{0}\right)$, respectively.

Let $f:\B(R_0) \longrightarrow \P^n(\C)$ be a meromorphic mapping. Fix a homogeneous coordinates system $\left(w_{0}: \cdots: w_{n}\right)$ on $\P^n(\C)$. We take a reduced representation $f=\left(f_{0}: \cdots: f_{n}\right)$ and set $\|f\|=\left(\left|f_{0}\right|^{2}+\cdots+\left|f_{n}\right|^{2}\right)^{1 / 2}$. 
The characteristic function of $f$ is defined by
$$T_f(r, r_{0})=\int_{\|z\|=r} \log \|f\| \sigma_{m}-\int_{\|z\|=r_0} \log \|f\| \sigma_{m}, \quad 0<r_{0}<r<R_{0},$$
where $\sigma_{m}(z)=d^c \log \|z\|^2\wedge(dd^c\log \|z\|^{2})^{m-1}.$ Here and throughout this paper, we assume that the numbers $r_{0}$ and $R_{0}$ are fixed with $0<r_{0}<R_{0}$. 

Suppose that $f$ is linearly nondegenerate. Then, there is an $n+1$-tuple $\alpha=(\alpha_0,\ldots,\alpha_n)\in (\mathbb N^m)^{n+1}$ such that $(\mathcal D^{\alpha_0}(f),\ldots,\mathcal D^{\alpha_{\ell (k)-1}})$ is a basis of $\mathcal M_p$-module $\mathcal F^{\ell (k)}$, where $\ell (k)=\dim_{\mathcal M_p}\mathcal F^{k}$ for all $k=1,\ldots,k_0=\min\{k:\ell (k)=n+1\}$. The tuple $\alpha=(\alpha_0,\ldots,\alpha_n)$ is called the admissible set of $f$ and 
$$W_{\alpha_0,\ldots ,\alpha_n}(f_0,\ldots ,f_n):=\det (\mathcal D^{\alpha_j}(f_i))_{0\le j,i\le n}$$ 
is called the generalized Wronskian of $f$. We note that $|\alpha|=\sum_{i=0}^n|\alpha_i|=\ell_f.$
\begin{proposition}[{see \cite[Proposition 2.12]{Fu86}}]\label{2.1}
 Let $H_1,\ldots ,H_q$ be $q$ hyperplanes in $\P^n(\C)$ in general position. Let $f$ be a linearly nondegenerate meromorphic mapping from the ball $\B^m(R_0)\subset\C^m$ into $\P^{n}(\C)$ with a reduced representation $f=(f_0,\ldots ,f_{n})$ and let $(\alpha_0,\ldots ,\alpha_n)$ be an admissible set of $f$. Then, for $0 < r_0 < R_0$ and $0< t\ell_f< p<1$,  there exists a positive constant $K$ such that for $r_0 < r < R < R_0$,
$$\int_{S(r)}\biggl |z^{\alpha_0+\cdots +\alpha_n}\dfrac{W_{\alpha_0,\ldots ,\alpha_n}(f_0,\ldots ,f_n)}{H_1(f)\ldots H_{q}(f)}\biggl |^t\cdot \|f\|^{t(q-n-1)}\sigma_m\le K\biggl (\dfrac{R^{2m-1}}{R-r}T_f(R,r_0)\biggl )^p.$$
\end{proposition}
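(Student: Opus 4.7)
The plan is to adapt the Cartan-type estimate to the ball $\B^m(R_0)$: first localize the global expression onto a carefully chosen $(n+1)$-subset of hyperplanes via the general-position assumption, then expand the resulting Wronskian quotient as a sum of logarithmic derivative monomials, and finally invoke the logarithmic-derivative lemma on balls.

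To begin, at a generic $z\in \B^m(R_0)$ I would reorder the indices so that $|H_{i_1}(f)(z)|\le\cdots\le |H_{i_q}(f)(z)|$. Since the $H_j$'s are in general position, any $n+1$ of them span the dual space, and there is a constant $c>0$ depending only on the family with $|H_{i_j}(f)(z)|\ge c\,\|f(z)\|$ for all $j\ge n+2$. Thus the $q-n-1$ largest factors cancel the weight $\|f\|^{t(q-n-1)}$ up to a bounded factor. Expressing each $f_k$ as a linear combination of $H_{i_1}(f),\ldots,H_{i_{n+1}}(f)$ and factoring out the determinant of the resulting matrix (which takes only finitely many values as $z$ ranges over the $\binom{q}{n+1}$ possible subsets), the generalized Wronskian $W_{\alpha_0,\ldots,\alpha_n}(f_0,\ldots,f_n)$ equals, up to this bounded factor, $W_{\alpha_0,\ldots,\alpha_n}(H_{i_1}(f),\ldots,H_{i_{n+1}}(f))$. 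After dividing by $H_{i_1}(f)\cdots H_{i_{n+1}}(f)$, elementary row operations rewrite the quotient as a finite sum of products of $\mathcal D^{\beta}H_{i_j}(f)/H_{i_j}(f)$ with $|\beta|\le \ell_f$, the prefactor $z^{\alpha_0+\cdots+\alpha_n}$ distributing among these as monomials $z^{\beta}$ of matching multi-degree.

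The technical core of the argument is then the logarithmic-derivative estimate on balls: for $0<t|\beta|<p<1$,
$$\int_{S(r)}\bigg|z^{\beta}\frac{\mathcal D^{\beta}H_{i_j}(f)}{H_{i_j}(f)}\bigg|^t\sigma_m\le K\bigg(\frac{R^{2m-1}}{R-r}T_f(R,r_0)\bigg)^p,$$
combined with H\"older's inequality (the hypothesis $t\ell_f<p<1$ being exactly what allows one to multiply several such factors into a single estimate) and the standard domination $T_{H_j(f)}(r,r_0)\le T_f(r,r_0)+O(1)$. Summing over the finitely many $(n+1)$-element subsets $I\subset\{1,\ldots,q\}$ and the finitely many Wronskian monomials then yields the required uniform estimate with a constant $K>0$ independent of $r$ and $R$.

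The main obstacle is precisely the logarithmic-derivative lemma on balls: a delicate potential-theoretic estimate that is the central analytic tool of \cite[Section 2]{Fu86}; everything else is algebraic bookkeeping driven by general position. Since the proposition is quoted verbatim from Fujimoto's paper, we would follow his proof, taking care only that the constant $K$ remains independent of the radii $r$ and $R$ and that the exponent $p$ coming from the logarithmic-derivative lemma absorbs all finitely many logarithmic-derivative factors produced in the Wronskian expansion.
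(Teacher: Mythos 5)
Your outline is correct and coincides with the argument behind the cited result: the paper itself gives no proof of Proposition \ref{2.1}, quoting it directly from Fujimoto \cite[Proposition 2.12]{Fu86}, and your reduction (general position to cancel $\|f\|^{t(q-n-1)}$ against the $q-n-1$ largest factors $|H_{i_j}(f)|$, the identity $W_{\alpha_0,\ldots,\alpha_n}(H_{i_1}(f),\ldots,H_{i_{n+1}}(f))=c_I\,W_{\alpha_0,\ldots,\alpha_n}(f_0,\ldots,f_n)$, expansion into weighted logarithmic derivatives, and the ball version of the logarithmic derivative lemma with H\"older under $t\ell_f<p<1$) is precisely Fujimoto's scheme. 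No gap to report; the only cosmetic remark is that the ordering argument already gives $|H_{i_j}(f)|\ge c\|f\|$ for all $j\ge n+1$, of which you use the weaker range $j\ge n+2$, which suffices.
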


\section{Main lemmas}
Let $f^1,\ldots,f^k$ be $k$ meromorphic mappings from $\B^m(R_0)$ into $\P^n(\C)$.
We fix a reduced representation $f^u=(f^u_0:\cdots :f^u_n)$ of $f^u$ and set $\|f^u\|=(|f^u_0|^2+\cdots+|f^u_n|^2)^{1/2}$ for $u=1,\ldots,k$.

Denote by $\mathcal C(\B^m(R_0))$ the set of all functions $g: \B^m(R_0)\to [0,+\infty]$ which is continuous outside an analytic set of codimension two and only attain $+\infty$ in an analytic thin set.

For a non negative integer $l_0$, we denote by $S(l_0;f^1,\ldots,f^k)$ the set of all functions $g$ in $\mathcal C(\B^m(R_0))$ such that there exist an element $\alpha=(\alpha_1,\ldots,\alpha_m)\in\N^m$ with $|\alpha|\le l_0$ satisfying: for every $0\le tl_0<p<1$, there exists a positive number $K$ with
$$\int_{S(r)}|z^\alpha g|^t\sigma_m \le K\left(\frac{R^{2m-1}}{R-r}\sum_{u=1}^kT_{f^u}(r,r_0)\right)^p$$
for all $r$ with $0<r_0<r<R<R_0$, where $z^\alpha=z_1^{\alpha_1}\cdots z_m^{\alpha_m}$. 

Let $p$ be a non negative integer. Denote by $B(l_0;(f^1,p_1),\ldots,(f^k,p_k))$ the set of all meromorphic functions $h$ on $\B^m(R_0)$ such that there exists $g\in S(l_0;f^1,\ldots,f^k)$ satisfying
$$|h|\le \|f^1\|^{p_1}\cdots \|f^k\|^{p_k}\cdot g,$$
outside a proper analytic subset of $\B^m(R_0)$. We will write $B(p,l_0;f^1,\ldots,f^k)$ as in \cite{Q21} for $B(l_0;(f^1,p),\ldots,(f^k,p))$.  

We easily have the following fundamental properties of the families $S(l_0;\{f^u\}_{u=1}^k)$ and $B(l_0;\{(f^u,p_u)\}_{u=1}^k)$.
\begin{itemize}
\item If $g$ is a constant function then $g\in S(0;\{f^u\}_{u=1}^k)$.
\item If $g_i\in S(l_i;\{f^u\}_{u=1}^k)\ (1\le i\le s)$ then $\prod_{i=1}^sg_i\in S(\sum_{i=1}^sl_i;\{f^u\}_{u=1}^k)$ (see Proposition 2.1 in \cite{Q21}).
\item A meromorphic mapping $h$ belongs to $B(0,l_0;\{f^u\}_{u=1}^k)$ if and only if $|h|\in S(l_0;\{f^u\}_{u=1}^k)$.
\item If $h_i\in B(l_i;\{f^1,p^i_1\},\ldots,\{f^k,p^i_k\})\ (1\le i\le s)$ then 
$$h_1\cdots h_m\in B(\sum_{i=1}^sl_i;\{f^1,\sum_{i=1}^sp^i_1\},\ldots,\{f^k,\sum_{i=1}^sp^i_k\}).$$
\item Proposition 2.1 implies that if $W(f)$ is the generalized wronskian of a linearly nondegenerate meromorphic mapping $f:\B^m(R_0)\to\P^n(\C)$  and $H_1,\ldots,H_q$ be $q$ hyperplanes in $\P^n(\C)$ in general position then the function $\left |\dfrac{W(f)\cdot\|f\|^{q-n-1}}{H_1(f)\ldots H_{q}(f)}\right |$ belongs to $S(\ell_f;f)$.
\end{itemize}

Firstly, we prove the following key lemma.
\begin{lemma}\label{3.1} 
	Let $M=\B^m(R_0)\ (0<R_0\le +\infty)$ be a complete connected K\"{a}hler manifold. Let $k$ be a positive integer and for each $u\in\{1,\ldots,k\}$, let $f^u$ be a linearly nondegenerate meromorphic mapping from $M$ into $\P^n(\C)$, which satisfies the condition $(C_{\rho_u})$ and has a reduced representation $f^u=(f_{0}^u:\cdots :f_{n}^u)$. Let $\{H^u_1,\ldots,H^u_{q_u}\}\ (1\le u\le k)$ be $k$ families of hyperplanes of $\P^n(\C)$ in general position, where $q_1,\ldots,q_k$ are positive integers. Assume that there exists a non zero holomorphic function $h$ on $\B(\R_0)$ such that:
\begin{itemize}
\item[(a)] $ h\in B(l_0;\{(f^u,p_u)\}_{u=1}^k),$ where $l_0$ is a non-negative integer, $p_1,\ldots,p_k$ are positive constants;
\item[(b)] $\nu_h\ge\sum_{u=1}^k\lambda_u\sum_{i=1}^{q_u}\nu^{[n]}_{H^u_i(f^u)},$ where $\lambda_u (1\le u\le k)$ are positive constants. 
\end{itemize}
Then there is an index $u$ such that $\lambda_u(q_u-n-1)-p_u\le 0,$ or 
$$ \sum_{u=1}^k\left [\frac{\lambda_u(q_u-n-1)-p_u}{\rho_u}-2\lambda_u \ell_{f^u}\right ]\le 2l_0.$$
\end{lemma}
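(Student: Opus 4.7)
The plan is to argue by contradiction, combining Fujimoto's completeness and volume-form method (cf.\ \cite{Fu86}) with the $B/S$-class machinery developed in \cite{Q21}. Assume $\lambda_u(q_u-n-1)-p_u>0$ for every $u$ and, toward a contradiction, assume also that
$$\sum_{u=1}^{k}\left(\frac{\lambda_u(q_u-n-1)-p_u}{\rho_u}-2\lambda_u\ell_{f^u}\right)>2l_0.$$
For each $u$ fix an admissible tuple $\alpha^u$ with $|\alpha^u|=\ell_{f^u}$ and write $W_u:=W_{\alpha^u}(f_0^u,\ldots,f_n^u)$; by Proposition~\ref{2.1} the function $\bigl|W_u\,\|f^u\|^{q_u-n-1}/\prod_{i}H_i^u(f^u)\bigr|$ belongs to $S(\ell_{f^u};f^u)$.

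The central step is to construct a nonnegative auxiliary function on $\B(R_0)$ of the form
$$v\,=\,|h|^{2\tau}\cdot\prod_{u=1}^{k}\left|\frac{W_u}{H_1^u(f^u)\cdots H_{q_u}^u(f^u)}\right|^{2t_u}\!\cdot\prod_{u=1}^{k}\|f^u\|^{-2\beta_u},$$
with positive parameters $\tau,t_u,\beta_u$ to be tuned. Using the standard Wronskian divisor lower bound $\nu_{W_u}\ge\sum_i(\nu_{H_i^u(f^u)}-\nu^{[n]}_{H_i^u(f^u)})$ together with hypothesis (b), $\nu_h\ge\sum_u\lambda_u\sum_i\nu^{[n]}_{H_i^u(f^u)}$, one checks that the uniform choice $t_u=\tau\lambda_u$ makes the divisor contributions of $|h|^{2\tau}$, of the hyperplane denominators, and of $|W_u|^{2t_u}$ cancel exactly, so that $v$ extends to a locally bounded continuous nonnegative function on $\B(R_0)$.

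Next, exploit $(C_{\rho_u})$. Each provides a bounded positive $h_u$ with $\rho_u\Omega_{f^u}+dd^c\log h_u^2\ge\operatorname{Ric}\omega$, equivalently $dd^c\log(\|f^u\|^{2\rho_u}h_u^2)\ge\operatorname{Ric}\omega$. Summing these with weights $2t_u/\rho_u$, absorbing the plurisubharmonic terms $dd^c\log|h|^{2\tau}$ and $dd^c\log|W_u|^{2t_u}$, and letting $H:=\prod_u h_u^{-2t_u/\rho_u}$ (which is bounded), one obtains $dd^c\log(vH)\ge\sigma\,\operatorname{Ric}\omega$ with $\sigma:=\sum_u 2t_u/\rho_u$, provided the $\beta_u$ are tuned so that the $\|f^u\|$-powers carry the appropriate $\rho_u$-weight. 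Fujimoto's volume-comparison argument, combined with completeness of the K\"ahler metric $\omega$ (see the proof of the Main Theorem of \cite{Fu86}), then forces
$$\int_{\B(R_0)}(vH)^{1/\sigma}\,\bigl(dd^c\|z\|^2\bigr)^m=+\infty.$$

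On the other hand, by assumption (a) there is $g\in S(l_0;f^1,\ldots,f^k)$ with $|h|\le\prod_u\|f^u\|^{p_u}\cdot g$. Substituting this into $v$ and combining the product rules for $S$-classes (the bulleted properties preceding the lemma) with Proposition~\ref{2.1} applied to each Wronskian factor shows that $v^{1/\sigma}$ lies in $B(L;\{(f^u,\gamma_u)\})$ with $L=(2l_0+2\sum_u t_u\ell_{f^u})/\sigma$ and $\gamma_u$ depending linearly on $t_u$. The $S$-class integration estimate then yields $\int_{\B(R_0)}v^{1/\sigma}\bigl(dd^c\|z\|^2\bigr)^m<+\infty$ whenever $L<1$, which, after clearing denominators, is precisely the assumed strict inequality; since $H$ is bounded, this contradicts the divergence above. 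The principal obstacle is the three-way exponent balancing: the triple $(\tau,t_u,\beta_u)$ must be tuned so that the divisor cancellation in $v$ is exact, the $(C_{\rho_u})$-inequalities collapse to a single $\operatorname{Ric}\omega$-bound with prefactor $\sigma$, and the $S$-class order $L$ falls below $1$ precisely under the negation of the conclusion---keeping these three linear constraints simultaneously compatible along the critical surface defined by the lemma's inequality is the heart of the argument.
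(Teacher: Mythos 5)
Your overall skeleton (auxiliary function mixing $h$, the generalized Wronskians, the hyperplane denominators and the norms $\|f^u\|$; plurisubharmonicity from the divisor inequality with weights $t_u=\tau\lambda_u$; the $(C_{\rho_u})$-weights normalized so that they sum to one; H\"older plus Proposition \ref{2.1}; contradiction with completeness via Yau--Karp) is essentially the paper's argument, but your proof has a genuine gap at the step where you claim that the $S$-class estimate ``yields $\int_{\B(R_0)}v^{1/\sigma}\,(dd^c\|z\|^2)^m<+\infty$ whenever $L<1$.'' Membership in $S(\cdot\,;f^1,\ldots,f^k)$ only bounds integrals over the spheres $S(r)$ by $K\bigl(\frac{R^{2m-1}}{R-r}\sum_u T_{f^u}(R,r_0)\bigr)^p$ with $p<1$; to integrate this in $r$ up to $R_0$ and get a finite ball integral one must know that the characteristic functions grow slowly, namely $\limsup_{r\to R_0}\frac{\sum_u T_{f^u}(r,r_0)}{\log 1/(R_0-r)}<\infty$, which is exactly what allows the choice $R=r+\frac{R_0-r}{e\max_u T_{f^u}(r,r_0)}$ and the integrable bound $\frac{K'}{(R_0-r)^{p}}\bigl(\log\frac{1}{R_0-r}\bigr)^{2p}$. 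When that $\limsup$ is infinite, your finiteness claim is simply false, and the paper switches to a different mechanism: the second main theorem on the ball (Fujimoto's Proposition 6.2 in \cite{Fu85}), which together with hypothesis (b) and $N_h\le\sum_u p_uT_{f^u}+O(\cdot)$ produces the contradiction directly. This growth dichotomy (Subcases 2.a and 2.b in the paper) is not a technicality you can absorb into ``exponent balancing''; without it the argument does not close.

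Two further points. First, you do not treat the case $R_0=+\infty$ separately; there the paper uses the classical second main theorem in $\C^m$ (and this is also the case where some $\rho_u$ may vanish, so your weights $2t_u/\rho_u$ and the quantity $\sigma$ are not even defined), whereas the volume-form/completeness argument is tailored to the finite ball. Second, the ``three-way exponent balancing'' you flag as the heart of the matter is left entirely unverified; in the paper it is explicit and does work out (take $t=2\big/\sum_u\frac{\lambda_u(q_u-n-1)-p_u}{\rho_u}$, $\lambda'_u=\frac{(\lambda_u(q_u-n-1)-p_u)t}{2\rho_u}$ so that $\sum_u\lambda'_u=1$, and H\"older exponents determined by $\lambda_u\ell_{f^u}$ and $l_0$), so you should carry out this computation rather than assert its compatibility, since the negated conclusion enters precisely through the requirement $t\bigl(\sum_u\lambda_u\ell_{f^u}+l_0\bigr)<1$.
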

Here, we regard that $\frac{x}{0}=+\infty$ and $\frac{-x}{0}=-\infty$ for every $x>0$. 
\begin{proof} Suppose contrarily that $\lambda_u(q_u-n-1)-p_u> 0,$ for all $u=1,\ldots,k$ and
$$ \sum_{u=1}^k\left [\frac{\lambda_u(q_u-n-1)-p_u}{\rho_u}-2\lambda_u \ell_{f^u}\right ]> 2l_0.$$
We consider the following two cases.

\noindent
\textbf{\textit{Case 1:}} $R_0=+\infty$. By the second main theorem in Nevanlinna theory we have
\begin{align*}
\sum_{u=1}^k\lambda_u(q_u-n-1)T_{f^u}(r,1)&\le\sum_{u=1}^k\lambda_u\sum_{i=1}^{q_u}N^{[n]}_{H^u_i(f^u)}(r,1)+o(\sum_{u=1}^kT_{f^u}(r,1))\\
&\le N_h(r,1)+o(\sum_{u=1}^kT_{f^u}(r,1))\\
&=\sum_{u=1}^kp_uT_{f^u}(r,1)+o(\sum_{u=1}^kT_{f^u}(r,1)),
\end{align*}
for all $r\in(1,+\infty)$ outside a Lebesgue set of finite measure.
This is a contradiction.

\noindent
\textbf{\textit{Case 2:}}  $R_0<+\infty$. For each $u\ (1\le u\le k)$, since $f^u$ is linearly nondegenerate, there exists an admissible set $(\alpha_0^u,\ldots,\alpha_n^u)\in\mathbb (\N^m)^{n+1}$ such that the generalized Wronskian
	$$ W(f^u):=\det\left (\mathcal D^{\alpha_i^u}(f_j^u); 0\le i,j\le n\right)\not\equiv 0. $$
	By usual argument in Nevanlinna theory, we have
	$$\nu_h\ge \sum_{u=1}^k\lambda_u\sum_{i=1}^q\nu^{[n]}_{H^u_i(f^u)}\ge\sum_{u=1}^k\lambda_u\left (\sum_{i=1}^q\nu_{H^u_i(f^u)}-\nu_{W(f^u)}\right).$$
	Put $w_u(z):=z^{\alpha_0^u+\cdots +\alpha_n^u}\dfrac{W(f^u)}{\prod_{i=1}^{q}H^u_i(f^u)}\ (1\le u\le k)$. 

Since $h\in B(l_0;\{(f^u,p_u)\}_{u=1}^k)$, there exists a non-negative plurisubharmonic function $g\in S(l_0;f^1,\ldots,f^k)$ and $\beta =(\beta_1,\ldots,\beta_m)\in\mathbb Z^{m}_+$ with $|\beta|\le l_0$ such that 
	\begin{align}\label{3.6}
	\int_{S(r)}\left |z^\beta g\right|^{t'}\sigma_m=O\left (\frac{R^{2m-1}}{R-r}\sum_{u=1}^kT_{f^u}(r,r_0)\right )^l,
	\end{align}
for every $0\le l_0t'<l<1$ and $|h|\le \prod_{u=1}^k\|f^u\|^{p_k}g.$
	Put $ t=\frac{2}{\sum_{u=1}^k(\lambda_u(q_u-n-1)-p_u)/\rho_u}>0$ and $ \phi:=|w_1|^{\lambda_1}\cdots |w_k|^{\lambda_k}\cdot|z^\beta h|,$
	Then $a=t\log\phi$ is a plurisubharmonic function on $\B^m(R_0)$ and
	$$(\sum_{u=1}^{k}\lambda_u\ell_{f^u}+l_0)t< 1.$$
	Therefore, we may choose a positive number $p$ such that $0<\sum_{u=1}^{k}\lambda_ut<p<1.$
	
	Since $f^u$ satisfies the condition $(C_{\rho_u})$, there is a continuous plurisubharmonic function $\varphi_u$ on $\B^m(R_0)$ such that
	$$ e^{\varphi_u}dV \le \|f^u\|^{2\rho_u}v_m.$$
	Note that in this case, $\rho_u>0$ for all $u$. Then the function $\varphi=\lambda'_1\varphi_1+\cdots+\lambda'_k\varphi_k+a$ is a plurisubharmonic on $\B^m(R_0)$, where $\lambda_u'=\frac{(\lambda_u(q-n-1)-p_u)t}{2\rho_u}$. One has $\sum_{u=1}^k\lambda_u'=1$ and then
	\begin{align*}
	&e^\varphi dV=e^{\lambda'_1\varphi_1+\cdots+\lambda'_k\varphi_k+t\log\phi}dV\le C'\cdot e^{t\log\phi}\cdot\prod_{u=1}^k\|f^u\|^{2\lambda'_u\rho_u}v_m=C'\cdot|\phi|^{t}\cdot\prod_{u=1}^k\|f^u\|^{2\lambda'_u\rho_u}v_m\\
	&=C''\cdot|z^\beta g|^t\cdot\prod_{u=1}^k(|w_u|^{\lambda_ut}\|f^u\|^{2\lambda_u'\rho_u+p_ut})v_m=C''\cdot|z^\beta g|^t\cdot\prod_{u=1}^k(|w_u|\cdot\|f^u\|^{(q_u-n-1)})^{t\lambda_u}v_m,
	\end{align*}
where $C',C''$ are positive constants. Setting $x_u=\frac{\sum_{i=1}^k\lambda_i\ell_{f^i}+l_0}{\lambda_u\ell_{f^u}}$, $y=\frac{\sum_{i=1}^k\lambda_i\ell_{f^i}+l_0}{l_0}$, we have $\sum_{u=1}^k\frac{1}{x_u}+\frac{1}{l_0}=1$. By integrating both sides of the above inequality over $\B^m(R_0)$ and applying H\"{o}lder inequality, we get
	\begin{align}\nonumber
	\int_{\B^m(R_0)}e^\varphi dV&\le C''\left(\int_{\B^m(R_0)}|z^\beta g|^{ty}\right)^{1/y}\cdot\prod_{u=1}^k\left (\int_{\B^m(R_0)}(|w_u|\cdot\|f^u\|^{(q_u-n-1)})^{\lambda_utx_u}v_m\right)^{1/x_u}\\
	\label{3.8}
	\begin{split}
	&=C''\left (2m\int_0^1r^{2m-1}\int_{S(r)}|z^\beta g|^{ty}v_m\right)^{1/y}\\
	&\times\prod_{u=1}^k\left (2m\int_0^1r^{2m-1}\int_{S(r)}(|w_u|\cdot\|f^u\|^{(q_u-n-1)})^{\lambda_ux_ut}v_m\right)^{1/x_u}.
	\end{split}
	\end{align}
	
	\textit{Subcase 2.a}: We suppose that
	$$ \lim\limits_{r\rightarrow R_0}\sup\dfrac{\sum_{u=1}^kT_{f^u}(r,r_0)}{\log 1/(R_0-r)}<\infty .$$
	We see that $\lambda_utx_u\ell_{f^u}=tyl_0=t(\sum_{i=1}^k\lambda_i\ell_{f^i}+l_0)<p$. By Proposition \ref{2.1}, there exists a positive constant $K$ such that, for every $0<r_0<r<R< R_0,$ we have
	\begin{align*}
	&\int_{S(r)}\bigl (|w_u|\cdot\|f^u\|^{(q_u-n-1)}\bigl )^{\lambda_ux_ut}\sigma_m\le K\left (\dfrac{R^{2m-1}}{R-r}T_{f^u}(R,r_0)\right )^{p}\ (1\le u\le k),\\
\text{ and }&\int_{S(r)}|z^\beta g|^{ty}v_m\le K\left (\dfrac{R^{2m-1}}{R-r}T_{f^u}(R,r_0)\right )^{p}.
	\end{align*}
	Choosing $R=r+\dfrac{R_0-r}{e\max_{1\le u\le k}T_{f^u}(r,r_0)}$, we have
	$ T_{f^u}(R,r_0)\le 2T_{f^u}(r,r_0)$,
	for all $r$ outside a subset $E$ of $(0,R_0]$ with $\int_E\frac{1}{R_0-r}dr<+\infty$.
	Then, the above inequality implies that
	\begin{align*}
	&\int_{S(r)}\bigl (|w_u|\cdot\|f^u\|^{(q_u-n-1)}\bigl )^{\lambda_ux_ut}\sigma_m\le \dfrac{K'}{(R_0-r)^{p}}\left (\log\frac{1}{R_0-r}\right )^{2p}\ (1\le u\le k),\\
	\text{ and }&\int_{S(r)}|z^\beta g|^{ty}v_m\le\dfrac{K'}{(R_0-r)^{p}}\left (\log\frac{1}{R_0-r}\right )^{2p}
	\end{align*}
	for all $r$ outside $E$, and for some positive constant $K'$. The inequality (\ref{3.8}) yields that
	\begin{align*}
	\int_{\B^m(R_0)}e^udV&\le C'' 2m\int_0^{R_0}r^{2m-1}\frac{K'}{R_0-r}\left (\log\frac{1}{R_0-r}\right)^{2p}dr< +\infty.
	\end{align*}
	This contradicts the results of S.T. Yau \cite{Y76} and L. Karp \cite{K82}. 
	
	\textit{Subcase 2.b:} We suppose that 
	$$ \lim\limits_{r\rightarrow R_0} \sup\dfrac{\sum_{u=1}^kT_{f^u}(r,r_0)}{\log 1/(R_0-r)}= \infty.$$
	By \cite[Proposition 6.2]{Fu85}, we have
	\begin{align*}
	\sum_{u=1}^kp_uT_{f^u}(r,r_0)&\ge N_h(r,r_0)+S(r)\ge\sum_{u=1}^k\lambda_p\sum_{i=1}^qN_{H^u_i(f^u)}^{[n]}(r,r_0)+S(r)\\ 
	&\ge \sum_{u=1}^k\lambda_u(q_u-n-1)T_{f^u}(r,r_0)+O\bigl(\log^+\frac{1}{R_0-r}+\log^+\sum_{u=1}^kT_{f^u}(r_0,r)\bigl), 
	\end{align*}
	for every $r$ excluding a set $E$ with $\int_E\frac{dr}{R_0-r}<+\infty$. This is a contradiction.

	Hence, the supposition is false. The lemma is proved.
\end{proof}
Secondly, we prove the following generalization theorem for uniqueness problem of linearly nondegenerate meromorphic mappings on K\"{a}hler manifolds.
\begin{lemma}\label{3.4}
Let $M$ be a complete, connected K\"{a}hler manifold whose universal covering is biholomorphic to $\B(R_0) \subset \C^m$, where $0<R_{0} \leq \infty$. Let $f, g: M \rightarrow\P^n(\C)$ be linearly nondegenerate meromorphic mappings, which satisfy the condition $(C_{\rho_f}), (C_{\rho_g})$ for non-negative constants $\rho_f,\rho_g$, respectively. Let $H_1,\ldots,H_q$ be $q$ hyperplane of $\P^n(\C)$ in general position and let $n+2\le p\le 2n+2<q$. Assume that:
\begin{itemize}
\item[(a)] $\dim f^{-1}(H_i\cap H_j)\le m-2 \ \forall\ 1\leq i< j\leq  q,$
\item[(b)] $f^{-1}(H_i)=g^{-1}(H_i)\ \forall\ 1\le i\le p,\ f^{-1}(H_i)\subset g^{-1}(H_i)\ \forall\ p+1\le i\le q,$
\item[(c)] $f=g$ on $\bigcup_{i=1}^qf^{-1}(H_i).$
\end{itemize}
Then $f\equiv g$ if there exist non-negative numbers $t\le\frac{2}{n}$ and $\alpha$ such that:
\begin{itemize}
\item[(1)] $(2+t)(q-n-1)-(2n+2+p(t+\alpha))> 0,$
\item[(2)] $\left(2+\frac{\alpha}{n}\right)(p-n-1)-(2n+2)> 0,$
\item[(3)] $\frac{(2+t)(q-n-1)-(2n+2+p(t+\alpha))}{\rho_f}+\frac{\left(2+\frac{\alpha}{n}\right)(p-n-1)-(2n+2)}{\rho_g}> 2\left((2+t)\ell_f+(2+\frac{\alpha}{n})\ell_g\right),$
\end{itemize}
where $t=0$ if $p> q-n-1$ and $t=\frac{2}{n}$ if $p\le q-n-1$.
\end{lemma}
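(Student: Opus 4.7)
The proof will proceed by contradiction: suppose $f\not\equiv g$. The aim is to construct a nonzero holomorphic function $h$ on $\B(R_0)$ belonging to a suitable class $B(l_0;(f,p_f),(g,p_g))$ whose zero divisor dominates a prescribed linear combination of truncated counting divisors. Applying Lemma~\ref{3.1} with $k=2$, hyperplane families $\{H_i\}_{i=1}^{q}$ (for $f$) and $\{H_i\}_{i=1}^{p}$ (for $g$), and weights $\lambda_f=2+t$, $\lambda_g=2+\frac{\alpha}{n}$, then forces one of two alternatives: some $\lambda_u(q_u-n-1)-p_u\le 0$, or a summed inequality against $2l_0$. Hypotheses (1) and (2) rule out the first alternative for $u=f$ and $u=g$ respectively, and (3) rules out the second; the contradiction will give $f\equiv g$.

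The key structural input is provided by the bilinear expressions $\Phi_{ij}:=H_i(f)H_j(g)-H_j(f)H_i(g)$ for $i\ne j$. Hypothesis (c) implies that for any $z\in f^{-1}(H_l)$ the projective equality $f(z)=g(z)$ forces a common scalar $c(z)$ with $H_i(f)(z)=c(z)H_i(g)(z)$ for every $i$; hence each $\Phi_{ij}$ vanishes on $\bigcup_{l=1}^q f^{-1}(H_l)$. Combined with (a), (b) and the assumption $\dim f^{-1}(H_i\cap H_j)\le m-2$, a multiplicity computation at generic points of $f^{-1}(H_l)$ produces lower bounds of the form $\nu^0_{\Phi_{ij}}\ge\sum_{l}\min\{\nu_{H_l(f)},\nu_{H_l(g)}\}$, where the contribution from each $H_l$ is controlled by the hypothesis distinguishing $l\le p$ (full equality of zero sets) from $p<l\le q$ (inclusion only).

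The function $h$ is then assembled from products of generalised Wronskians $W(f)^{2+t}$ and $W(g)^{2+\alpha/n}$, bilinear factors $\Phi_{ij}$ ranging over index subsets of cardinality governed by $p$, and correction terms of the form $\prod H_i(g)^{\beta_i}$ that upgrade full to truncated multiplicities via the standard estimate $\nu^0_{H_l(f)}-\nu^{[n]}_{H_l(f)}\le \frac{1}{n}\nu^0_{W(f)}$ near generic zeros. Proposition~\ref{2.1} applied to $f$ (with all $q$ hyperplanes) and to $g$ (with only the first $p$), which gives $|W(f)\|f\|^{q-n-1}/\prod_{i=1}^q H_i(f)|\in S(\ell_f;f)$ and the analogous statement for $g$, combined with the multiplicative closure properties of $S(\cdot)$ and $B(\cdot)$ recalled at the start of Section~3, then verifies $h\in B(l_0;(f,p_f),(g,p_g))$ with $p_f=2n+2+p(t+\alpha)$, $p_g=2n+2$, $l_0=(2+t)\ell_f+(2+\frac{\alpha}{n})\ell_g$, together with the divisor bound $\nu_h\ge (2+t)\sum_{i=1}^q\nu^{[n]}_{H_i(f)}+(2+\frac{\alpha}{n})\sum_{i=1}^p\nu^{[n]}_{H_i(g)}$.

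Substituting these numerical data into Lemma~\ref{3.1} produces precisely the alternative which hypotheses (1)--(3) negate. The principal obstacle will be the combinatorial engineering of the Wronskian exponents and of the $\Phi_{ij}$ factors so that the divisor estimate is sharp and the $B$-class membership is correctly accounted; a secondary subtlety is the split $t=0$ (used when $p>q-n-1$, where the $p$ strongly shared hyperplanes suffice to close the argument directly) versus $t=\frac{2}{n}$ (used when $p\le q-n-1$, where a fractional additional copy of $W(f)$ must be used to compensate for the weakly shared hyperplanes $H_{p+1},\ldots,H_q$, which contribute to $\nu_h$ only through the weaker inclusion $f^{-1}(H_i)\subset g^{-1}(H_i)$).
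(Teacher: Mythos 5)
Your overall architecture matches the paper's: argue by contradiction, build an auxiliary holomorphic function from the bilinear forms $H_i(f)H_j(g)-H_j(f)H_i(g)$ together with a correcting factor $\prod_{j\le p}H_j(f)^{t+\alpha}$, and feed it into Lemma \ref{3.1} with weights $\lambda_f=2+t$, $\lambda_g=2+\frac{\alpha}{n}$, powers $p_f=2n+2+p(t+\alpha)$, $p_g=2n+2$, and the divisor bound $\nu_h\ge (2+t)\sum_{i=1}^q\nu^{[n]}_{H_i(f)}+(2+\frac{\alpha}{n})\sum_{i=1}^p\nu^{[n]}_{H_i(g)}$. However, there are two genuine gaps. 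First, you never address how to select the bilinear factors so that they are not identically zero while each of the indices $1,\ldots,2n+2$ occurs the right number of times. Since $f\not\equiv g$ only guarantees that, for each $i$, at most $n$ indices $j$ satisfy $\frac{H_i(f)}{H_j(f)}\equiv\frac{H_i(g)}{H_j(g)}$, some pairs $\Phi_{ij}$ may vanish identically, and the divisor estimate requires every index among $1,\ldots,2n+2$ to appear exactly twice as a ``distinguished'' index. The paper resolves this by forming the graph whose edges are the admissible pairs, noting every vertex has degree at least $n+2>\frac{2n+2}{2}$, and invoking Dirac's theorem to extract a Hamiltonian cycle $(1,2,\ldots,2n+2,1)$; the factors $P_i=H_i(f)H_{\sigma(i)}(g)-H_i(g)H_{\sigma(i)}(f)$ are taken along this cycle. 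Without such a selection device your claimed lower bound for $\nu_h$ is not justified.

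Second, your numerology is off because you put the Wronskians into $h$ itself and take $l_0=(2+t)\ell_f+(2+\frac{\alpha}{n})\ell_g$. Lemma \ref{3.1} already deducts the Wronskian contribution internally: its second alternative is $\sum_u\bigl[\frac{\lambda_u(q_u-n-1)-p_u}{\rho_u}-2\lambda_u\ell_{f^u}\bigr]\le 2l_0$, so with your $l_0$ the inequality you would need to negate is $\frac{(2+t)(q-n-1)-p_f}{\rho_f}+\frac{(2+\frac{\alpha}{n})(p-n-1)-p_g}{\rho_g}>4\bigl((2+t)\ell_f+(2+\frac{\alpha}{n})\ell_g\bigr)$, which hypothesis (3) (with factor $2$, not $4$) does not supply; the Wronskians get counted twice. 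In the paper the auxiliary function $\tilde P=P^k\prod_{j=1}^pH_j(f)^{k(t+\alpha)}$ contains no Wronskians at all, so it lies in $B\big(0;(f,(2n+2+p(t+\alpha))k),(g,(2n+2)k)\big)$ with $l_0=0$, and then (1)--(3) are exactly what is needed. Relatedly, the truncation from full to level-$n$ multiplicities is not achieved via a Wronskian correction of the form you describe, but by the elementary pointwise inequality $\min\{a,b\}\ge\min\{a,n\}+\min\{b,n\}-n$ applied on $f^{-1}(H_j)$ together with the $\nu^{[1]}$ contributions of the remaining hyperplanes in each factor $P_i$; the factor $\prod_{j\le p}H_j(f)^{k(t+\alpha)}$ serves only to cancel the full-multiplicity term $-(t+\alpha)\sum_{j\le p}\nu_{H_j(f)}$ created by the weighting that introduces $t$ and $\alpha$, at the cost of raising $p_f$ to $2n+2+p(t+\alpha)$.
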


\begin{proof}
Since the universal covering of $M$ is biholomorphic to $\B(R_0), 0<R_{0} \leq+\infty$, by using the universal covering if necessary, without loss of generality we assume that $M=B(R_0) \subset \C^m$. Also, we may assume that $t$ and $\alpha$ are rational numbers.

Suppose contrarily that $f\not\equiv g$. Consider the simple graph $\mathcal H$, where the set of vertice is $\{1,2,\ldots,2n+2\}$ and the set of edges is consist of all pairs $\{i,j\}$ such that $\frac{H_i(f)}{H_j(f)}\not\equiv\frac{H_i(g)}{H_j(g)}$. Since $f\not\equiv g$, the degree of $\mathcal H$ at every vertex is at least $(2n+2)-n>\frac{2n+2}{2}$. By Dirac's theorem, $\mathcal H$ has a Hamiltonian cycle, for instance it is $(1,2,3,\ldots,2n+2,1)$. Therefore, 
$$P_{i}=H_{i}(f) H_{\sigma(i)}(g)-H_{i}(g) H_{\sigma(i)}(f)\not\equiv 0,$$
where $\sigma(i)=i+1$ for $i<q$ and $\sigma(2n+2)=1$. We easily see that 
$$\nu_{P_i}(z)\geq \sum_{j=i,\sigma(i)}\min \left\{\nu_{H_j(f)}(z), \nu_{H_j(g)}(z)\right\}+\sum_{\underset{j\ne i,\sigma(i)}{j=1}}^{q}\min \left\{\nu_{H_j(f)}(z),1\right\}$$
for all $z$ ouside the analytic subset $\bigcup_{1\le u<v\le q}f^{-1}(H_u\cap H_v)$, which is of codimension two.

Then, by setting $P=\prod_{i=1}^{2n+2} P_{i} \not \equiv 0$, we have
\begin{align*}
\nu_{P}&\geq 2\sum_{j=1}^{2n+2}\min \left\{\nu_{H_j(f)}, \nu_{H_j(g)}\right\}+2n\sum_{j=1}^{2n+2}\nu^{[1]}_{H_j(f)}+(2n+2)\sum_{j=2n+3}^q\nu^{[1]}_{H_j(f)}\\
&\geq 2\sum_{j=1}^p\left (\nu^{[n]}_{H_j(f)}+\nu^{[n]}_{H_j(g)}-n\nu^{[1]}_{H_j(f)}\right)+2n\sum_{j=1}^p\nu^{[1]}_{H_j(f)}+(2n+2)\sum_{j=p+1}^q\nu^{[1]}_{H_j(f)}\\
&= 2\sum_{j=1}^p\nu^{[n]}_{H_j(f)}+2\sum_{j=1}^p\nu^{[n]}_{H_j(g)}+(2n+2)\sum_{j=p+1}^q\nu^{[1]}_{H_j(f)}\\
&\ge  2\sum_{j=1}^q\nu^{[n]}_{H_j(f)}+2\sum_{j=1}^p\nu^{[n]}_{H_j(g)}+2\sum_{j=p+1}^q\nu^{[1]}_{H_j(f)}\\
&\ge  (2+t)\sum_{j=1}^q\nu^{[n]}_{H_j(f)}-(t+\alpha)\sum_{j=1}^p\nu^{[n]}_{H_j(f)}+\alpha\sum_{j=1}^p\nu^{[1]}_{H_j(f)}+2\sum_{j=1}^p\nu^{[n]}_{H_j(g)}\\
&\ge (2+t)\sum_{j=1}^q\nu^{[n]}_{H_j(f)}-(t+\alpha)\sum_{j=1}^p\nu_{H_j(f)}+\left(2+\frac{\alpha}{n}\right)\sum_{j=1}^p\nu^{[n]}_{H_j(g)},
\end{align*}

Take a positive integer $k$ so that $k(t+\alpha)$ is an integer and consider the holomorphic function $\tilde P=P^k\cdot\prod_{j=1}^{p}H_j(f)^{k(t+\alpha)}.$ It is clear that
$$ \nu_{\tilde P}\ge k(t+\alpha)\sum_{j=1}^p\nu_{H_j(f)}+k\nu_P\ge k(2+t)\sum_{j=1}^q\nu^{[n]}_{H_j(f)}+k\left(2+\frac{\alpha}{n}\right)\sum_{j=1}^p\nu^{[n]}_{H_j(g)}$$
and $\tilde P\in B\big(0;(f,(2n+2+p(t+\alpha))k),(g,(2n+2)k)\big)$. By Lemma \ref{3.1}, one of the following must hold:
\begin{itemize}
\item $(2+t)(q-n-1)-(2n+2+p(t+\alpha))\le 0,$
\item $\left(2+\frac{\alpha}{n}\right)(p-n-1)-(2n+2)\le 0,$
\item $\frac{(2+t)(q-n-1)-(2n+2+p(t+\alpha))}{\rho_f}+\frac{\left(2+\frac{\alpha}{n}\right)(p-n-1)-(2n+2)}{\rho_g}\le 2\left((2+t)l_f+(2+\frac{\alpha}{n})l_g\right)$.
\end{itemize}
This is a contradiction. Therefore, we must have $f\equiv g$. The lemma is proved.
\end{proof}

\section{Uniqueness problem}
\begin{proof}[Proof of Theorem \ref{1.1}] By Theorem \ref{3.4}, in order to show that $f\equiv g$ we just show the existence of the non-negative numbers $t\le\frac{2}{n}$ and $\alpha$ satisfying the inequalities (1), (2), (3) of Theorem \ref{3.4}. We choose
$$ \alpha:=n\left(\frac{2n+2+\epsilon}{p-n-1}-2\right),$$
where $\epsilon$ is a positive constant small enough. Then the inequality (2) of Theorem \ref{3.4} is satisfied. The inequalities (1) and (3) of Theorem \ref{3.4} become
\begin{align}\label{4.1}
(2+t)(q-n-1)-2n-2-p\left(t-2n+n\frac{2n+2+\epsilon}{p-n-1}\right)> 0,
\end{align}
and
\begin{align}
\label{4.2}
\begin{split}
(2+t)(q-n-1)-2n-2-p\left(t-2n+n\frac{2n+2+\epsilon}{p-n-1}\right)+\epsilon\\
>2\rho\left((2+t)\ell_f+\frac{2n+2+\epsilon}{p-n-1}\ell_g\right).
\end{split}
\end{align}
Therefore, in order to show the existence of such $t$ and $\alpha$ (equivalent to the existence of $\epsilon >0$) we just to show that there is $t\in[0,\frac{2}{n}]$ such that:
\begin{align}
\label{4.3}
\begin{split}
(2+t)(q-n-1)-2n-2-&p\left(t-2n+n\frac{2n+2}{p-n-1}\right)\\
&>2\rho\left((2+t)\ell_f+\frac{2n+2}{p-n-1}\ell_g\right).
\end{split}
\end{align}
If put $t=0$ then the inequality (\ref{4.3}) is equivalent to that 
\begin{align*}
q-n-1&>n+1+pn\left(\frac{n+1}{p-n-1}-1\right)+2\rho\left(\ell_f+\frac{n+1}{p-n-1}\ell_g\right)\\ 
\Leftrightarrow\ q&>2n+2+pn\left(\frac{n+1}{p-n-1}-1\right)+2\rho\left(\ell_f+\frac{n+1}{p-n-1}\ell_g\right).
\end{align*}
If put $t=\frac{2}{n}$ then the inequality (\ref{4.3}) is equivalent to that 
\begin{align*}
\frac{n+1}{n}(q-n-1)&>n+1+p\left(\frac{1-n^2}{n}+n\frac{n+1}{p-n-1}\right)+2\rho\left(\frac{n+1}{n}\ell_f+\frac{n+1}{p-n-1}\ell_g\right)\\
\Leftrightarrow\ q&>2n+1+pn\left(\frac{n}{p-n-1}-\frac{n-1}{n}\right)+2\rho\left(\ell_f+\frac{n}{p-n-1}\ell_g\right).
\end{align*}
Then, with the asumption of the themrem, the inequality (\ref{4.3}) is satisfied for $t=0$ or $t=\frac{2}{n}$. Hence, $f\equiv g$.
The proof of the theorem is completed.
\end{proof}

In order to prove Theorem \ref{1.2}, we need the following proposition of H. Fujimoto \cite{Fu75}. 

\begin{proposition}[{See H. Fujimoto \cite{Fu75}}]\label{new2} 
Let $G$ be a  torsion free abelian group and $A=(a_1,\ldots,a_q)$ be a $q-$tuple of elements $a_i$ in $G$. If $A$ has the property $(P_{r,s})$ for some $r,s$ with $q\ge r>s>1,$ then there exist $i_1,\ldots,i_{q-r+2}$ with $1\le i_1<\cdots<i_{q-r+2}\le q$ such that 
$a_{i_1}=a_{i_2}=\cdots=a_{i_{q-r+2}}.$
\end{proposition}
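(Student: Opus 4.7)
The plan is to argue by contradiction using a pigeonhole argument on the equivalence classes of equal components. First I would fix the equivalence relation on $\{1,\ldots,q\}$ defined by $i\sim j$ iff $a_i=a_j$, and let $C_1,\ldots,C_m$ be the equivalence classes arranged so that $|C_1|\ge |C_2|\ge\cdots\ge |C_m|$. The conclusion to prove is $|C_1|\ge q-r+2$, so I assume for contradiction that $|C_1|\le q-r+1$, which forces at least $r-1$ indices to lie outside the largest class.

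Second, I would use property $(P_{r,s})$ to convert any choice of $r$ indices into an algebraic relation in $G$. Since $G$ is torsion-free abelian, it has unique roots and cancellation, so any nontrivial multiplicative identity among $a_{i_1},\ldots,a_{i_r}$ compelled by $(P_{r,s})$ forces pointwise equalities $a_{i_j}=a_{i_k}$ between specific components. The torsion-freeness assumption is what prevents $(P_{r,s})$ from being satisfied vacuously by cyclic cancellations.

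Third, I would construct explicitly an $r$-subset $I=\{i_1,\ldots,i_r\}$ that spreads across the classes as much as possible: take one representative from each class $C_2,\ldots,C_m$, then fill the remaining $r-(m-1)$ slots from $C_1$ (or from a greedy distribution with at most $s-1$ elements per class). The bound $|C_1|\le q-r+1$, together with $q\ge r>s>1$, provides exactly the counting slack needed to realize this construction. I would then check that the algebraic relation forced by $(P_{r,s})$ on this $I$ must identify indices across distinct $C_j$'s, contradicting the fact that $a_i\ne a_j$ whenever $i,j$ lie in different classes.

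The main obstacle will be Step 3: carrying out the combinatorial construction in a way that is tight enough to always produce a violation of $(P_{r,s})$ under the sole hypothesis $|C_1|\le q-r+1$. I expect a short case analysis—separating whether $m-1\le r-s+1$ or not, i.e., whether there are enough classes to avoid repeating any class $s$ times—together with an induction on $q-r$ to reduce the general situation to the extremal case where equality $|C_1|=q-r+1$ holds and only one extra class is nonempty.
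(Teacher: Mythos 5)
The paper itself gives no proof of this proposition --- it is quoted from Fujimoto \cite{Fu75} --- so your proposal must be measured against Fujimoto's argument, and it has a genuine gap at its core, namely Step 2. In a torsion-free abelian group, cancellation and unique roots do \emph{not} turn a product identity between two $s$-subsets into pointwise equalities of the factors: in $\mathbb{Z}$ (written additively) one has $1+4=2+3$ although the four elements are pairwise distinct. Consequently Step 3 collapses as well: for the $r$-subset you propose (one representative from each class), the relation supplied by $(P_{r,s})$ can perfectly well be an identity among pairwise distinct elements, so no contradiction with the class decomposition follows. The same example ($G=\mathbb{Z}$, values $1,2,3,4$, $s=2$) shows that ``spreading across classes'' alone never produces a violating pair: the $2$-subset $\{2,3\}$ does admit a different $2$-subset with the same sum even though all values are distinct. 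One must also choose the right $s$-subset inside the right $r$-subset, and nothing in your counting, nor in the suggested induction on $q-r$, identifies it.

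The missing idea is an order. A torsion-free abelian group admits a total order compatible with the group law (equivalently, it embeds in a $\mathbb{Q}$-vector space, which can be ordered lexicographically). Sort the entries, $b_1\le b_2\le\cdots\le b_q$, and apply $(P_{r,s})$ to the $r$-tuple consisting of the $s$ smallest entries $b_1,\ldots,b_s$ together with the $r-s$ largest entries $b_{q-r+s+1},\ldots,b_q$, taking for $\{i_1,\ldots,i_s\}$ the indices of $b_1,\ldots,b_s$. Any different $s$-subset with the same product must exchange some $k\ge 1$ of the small entries for $k$ of the large ones; cancelling the common factors and using compatibility of the order gives $b_s^{k}\ge b_{q-r+s+1}^{k}$, while $b_s\le b_{q-r+s+1}$, so torsion-freeness (strict monotonicity of $k$-th powers) forces $b_s=b_{q-r+s+1}$. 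Hence $b_s=b_{s+1}=\cdots=b_{q-r+s+1}$, a block of exactly $q-r+2$ equal elements. It is this extremal choice of the $r$-tuple and of the $s$-subset, not a class-spreading pigeonhole plus cancellation, that makes the hypothesis bite; your Steps 2 and 3 would need to be replaced by this ordering argument.
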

Here, the $q$-tuple $A$ has the property $(P_{r,s})$ if any $r$ elements $a_{l(1)},\ldots,a_{l(r)}$ in $A$ satisfy the condition that for any given $i_1,\ldots,i_s \ (1\le i_1<\cdots<i_s\le r)$, there exist $j_1,\ldots,j_s \ (1\le j_1<\cdots<j_s\le r)$ with $\{i_1,\ldots,i_s\}\neq\{j_1,\ldots,j_s\}$ such that $a_{l(i_1)}\cdots a_{l(i_s)}=a_{l(j_1)}\cdots a_{l(j_s)}.$

\begin{proof}[Proof of Theorem \ref{1.2}]
Similarly as in the proof of Lemma \ref{3.4}, we may suppose that $M=\B(R_0)\subset \C^m$. Suppose contrarily that $f\not\equiv g$. 

Consider the simple graph $\mathcal H$, where the set of vertice is $\{1,2,\ldots,q\}$ and the set of edges consists of all pair $\{(i,j)|\frac{H_i(f)}{H_j(f)}\not\equiv\frac{H_i(g)}{H_j(g)}\}$. Since $f\not\equiv g$, the degree of $\mathcal H$ at every vertex is at least $q-n>\frac{q}{2}$. By Dirac's theorem, $\mathcal H$ has a Hamiltonian cycle, for instance it is $(1,2,3,\ldots,q,1)$. Therefore 
$$P_{i}=H_{i}(f) H_{\sigma(i)}(g)-H_{i}(g) H_{\sigma(i)}(f)\not\equiv 0,$$
where $\sigma(i)=i+1$ for $i<q$ and $\sigma(q)=1$. We easily see that 
$$\nu_{P_i}(z)\geq \sum_{j=i,\sigma(i)}\min \left\{\nu_{H_j(f)}(z), \nu_{H_j(g)}(z)\right\}+\sum_{\underset{i\ne j\ne \sigma(i)}{j=1}}^{q}\min \left\{\nu_{H_j(f)}(z),1\right\}$$
for all $z$ ouside the analytic subset $\bigcup_{1\le u<v\le q}f^{-1}(H_u\cap H_v)$, which is of codimension two.

Define $\nu_i=\min\{1,|\nu_{H_i(f)}-\nu_{H_i(g)}|\}$ and $\ell'=\max\{0,\ell-n\}$. We see that if $\nu_i(z)\ne 0$ then $\min\{\nu_{H_i(f)},\nu_{H_i(g)}\}\ge \ell$. Then, by setting $P=\prod_{i=1}^{q} P_{i} \not \equiv 0$, we have
\begin{align}\label{new}
\begin{split}
\nu_{P}&\geq 2\sum_{j=1}^{q}\min \left\{\nu_{H_j(f)}, \nu_{H_j(g)}\right\}+(q-2)\sum_{j=1}^{q}\nu^{[1]}_{H_j(f)}\\
&\geq 2\sum_{j=1}^q\left (\nu^{[n]}_{H_j(f)}+\nu^{[n]}_{H_j(g)}-n\nu^{[1]}_{H_j(f)}+\ell'\nu_i\right)+(q-2)\sum_{j=1}^{q}\nu^{[1]}_{H_j(f)}\\
&= 2\sum_{j=1}^q(\nu^{[n]}_{H_j(f)}+\nu^{[n]}_{H_j(g)})+\frac{q-2n-2}{2}\sum_{j=1}^q(\nu^{[1]}_{H_j(f)}+\nu^{[1]}_{H_j(g)})+2\ell'\sum_{j=1}^q\nu_i\\
&\ge \frac{q+2n-2}{2n}\sum_{j=1}^q(\nu^{[n]}_{H_j(f)}+\nu^{[n]}_{H_j(g)})+2\ell'\sum_{j=1}^q\nu_i.
\end{split}
\end{align}
It is clear that $P\in B\big(q,0;f,g)$. By Lemma \ref{3.1}, we have
\begin{align*}
&\left(\frac{q+2n-2}{2n}(q-n-1)-q\right)\cdot\left(\frac{1}{\rho_f}+\frac{1}{\rho_g}\right)\le \frac{q+2n-2}{n}(\ell_f+\ell_g)\\
\Rightarrow\hspace{30pt} &\frac{q^2-(n+3)q-(n+1)(2n-2)}{q+2n-2}\le \frac{2\rho_f\rho_g(\ell_f+\ell_g)}{\rho_f+\rho_g}\\
\Rightarrow\hspace{30pt} &\frac{q(q-2)}{q+2n-2}\le n+1+\rho_{f,g}(\ell_f+\ell_g).
\end{align*}
This implies that
\begin{align}\label{4.4}
q\le \frac{q+2n-2}{q-2}\left(n+1+\rho_{f,g}(\ell_f+\ell_g)\right).
\end{align}

We now prove the theorem in the following two cases.

\noindent
\textbf{(a)} Assume that $\ell=1$. Since $q\ge 2n+2$, one has $\frac{q+2n-2}{q-2}\le 2$. Then, from (\ref{4.4}) we get 
$$q\le 2n+2+2\rho_{f,g}(\ell_f+\ell_g).$$
This is a contradiction. Therefore, the supposition is false. Hence, $f\equiv g$. 

\noindent
\textbf{(b)} Assume that $\ell>n$. Then $\ell'>0$. Because of Part a), it is enough for us to consider the case where $q\le 2n+2+2\rho_{f,g}(\ell_f+\ell_g).$

We set $h_i=\frac {H_i(f)}{H_i(g)}\ (1\le i\le q).$ Then
$\frac {h_i}{h_j} = \frac {H_i(f)\cdot H_j(g)}{H_j(f)\cdot H_i(g)}$
does not depend on the choice of the reduced representations of $f$ and $g$ respectively.

Take an arbitrary subset of $2n+2$ elements of the set $\{1,\ldots,q\}$, for instance it is $\{1,\ldots,2n+2\}$.
Denote by $\mathcal {I}$ the set of all combinations $I=(i_1,\ldots,i_{n+1})$ with $1\le i_1<...<i_{n+1}\le q$. For each $I=(i_1,\ldots,i_{n+1})\in \mathcal {I}$, put $h_I=\prod_{j=1}^{n+1}h_{i_j}$ and define 
$$A_I=(-1)^{\frac {(n+1)(n+2)}{2}+i_1+\cdots+i_{n+1}}\cdot \det (a_{i_rl};1\le r \le n+1,0\le l \le n)$$
$$\quad\quad\quad\quad\quad\quad\quad \quad\quad\quad\quad\quad\quad\quad\times\det (a_{j_sl};1\le s \le n+1,0\le l \le n),$$
where $J=(j_1,\ldots,j_{n+1})\in \mathcal {I}$ such that $I \cup J =\{1,2,\ldots,2n+2\}.$
Since 
$$\sum_{k=0}^n a_{ik}f_{k}-h_i\cdot \sum_{k=0}^n a_{ik}g_{k}=0\ (1\le i \le 2n+2),$$ 
one has
$$\det \ (a_{i0},\ldots,a_{in},a_{i0}h_i,\ldots,a_{in}h_i; 1\le i \le 2n+2)=0.$$
Therefore, $\sum_{I\in \mathcal {I}}A_Ih_I=0$ (note that $A_I\in\C^*$).

Take $I_0\in \mathcal {I}$. Denote by $t$ the minimal number satisfying the following: 
There exist $t$ elements $I_1,\ldots,I_t \in \mathcal {I}\setminus \{I_0\}$ and $t$ nonzero constants $b_i \in 
\C^*$ such that $\sum_{i=0}^tb_ih_{I_i}=0.$
By the minimality of $t$, the family $\{h_{I_1},\ldots,h_{I_t}\}$ is linearly independent over $\C$.

{\bf Case 1:} $t=1$. Then $\dfrac {h_{I_0}}{h_{I_1}}\in\C^*.$

{\bf Case 2:} $t\ge 2.$  Consider the linearly nondegenerate meromorphic mapping $F$ from $\B^m(R_0)$ into $\P^{t-1}(\C)$ with a reduced representation
$F=(h_{I_1}d:\cdots :h_{I_t}d),$ where $d$ is a meromorphic function. We see that
\begin{align*}
\sum_{i=0}^t\nu_{d h_{I_i}}^{[1]}(z)\le&\sum_{j=1}^{2n+2}\sharp\{i|j\in I_i,\nu_{H_j(f)}(z)>\nu_{H_j(g)}(z)\}\\
&\ \ \ +\sum_{i=1}^{2n+2}\sharp\{i|j\not\in I_i,\nu_{H_j(f)}(z)<\nu_{H_j(g)}(z)\}\\
=&\sum_{i=1}^{2n+2}\frac{p}{2}\nu_i(z)\le\frac{p}{2}\sum_{i=1}^q\nu_i(z),
\end{align*}
for every $z$ outside an analytic set of codimension two. Here by $\sharp S$ we denote the number of elements of the set $S$.

It is clear that $T_F(r,r_0)\le (n+1)(T_{f}(r,r_0)+T_{g}(r,r_0))$. Let $W(F)$ be a generalized Wronskian of $F$ and set 
$$G:=\prod_{0\le s<l\le 2}\left (\dfrac{(h_{I_l}d-h_{I_s}d)\cdot W(F)}{\prod_{i=0}^t(h_{I_i}d)}\right ).$$
Then we have $G\in B(0,3(t-1)(t+1)/2;F)\subset B(0,3(p-2)p/2;f,g)$. For each subset $J\subset\{1,\ldots,q\}$, set $J^c=\{1,\ldots,q\}\setminus J$. It is clear that 
$$\bigcup_{0\le s<l\le 2}((I_l\setminus I_s)\cup (I_s\setminus I_l))^c=\{1,\ldots,q\}.$$
We have
\begin{align*}
-\nu_G&=3\sum_{i=0}^t\nu_{h_{I_i}d}-3\nu_{W(F)}-\sum_{0\le s<l\le 2}\nu_{h_{I_l}d-h_{I_s}d}\\
&\le 3\sum_{i=0}^t\nu_{h_{I_i}d}^{[t-1]}- \sum_{0\le s<l\le 2}(\nu^0_{h_{I_l}/h_{I_s}-1})\\
&\le 3(t-1)\sum_{i=0}^t\nu_{h_{I_i}d}^{[1]}- \sum_{0\le s<l\le 2}\sum_{i\in ((I_l\setminus I_s)\cup (I_s\setminus I_l))^c}\nu^{[1]}_{H_i(f)}\\
&\le 3(p-2)\sum_{i=0}^t\nu_{h_{I_i}d}^{[1]}- \sum_{i=1}^q\nu^{[1]}_{H_i(f)}\le \frac{3(p-2)p}{2}\nu_i-\sum_{i=1}^q\nu^{[1]}_{H_i(f)}.
\end{align*}
Then, we have
\begin{align*}
\nu_{\prod_{i=1}^qP_i} &\ge\frac{q+2n-2}{2n}\sum_{i=1}^{q}(\nu_{H_i(f)}^{[n]}+\nu_{H_i(g)}^{[n]})+2\ell' \sum_{i=1}^{q}\nu_i\\
&\ge \frac{q+2n-2}{2n}\sum_{i=1}^{q}\left (\nu_{H_i(f)}^{[n]}+\nu_{H_i(g)}^{[n]}\right )+\frac{4\ell'}{3(p-2)p}\left (\sum_{i=1}^q\nu^{[1]}_{H_i(f)}-\nu_G\right).
\end{align*}
This yields that
$$ \nu_{G^{4\ell'} (\prod_{i=1}^qP_i)^{3(p-2)p}}\ge  \left(3(p-2)p\frac{q+2n-2}{2n}+\frac{2\ell'}{n}\right)\sum_{i=1}^{q}\left (\nu_{H_i(f)}^{[n]}+\nu_{H_i(g)}^{[n]}\right).$$
Note that $G\in B(0,3(p-2)p/2;f,g)$ and $P_i\in B(1,0;f,g)$. Then $G^{4\ell'}(\prod_{i=1}^qP_i)^{3(p-2)p}$ belongs to $B(3q(p-2)p,6\ell' (p-2)(p-1);f,g)$. From Lemma \ref{3.1}, we have
\begin{align*}
q&\le n+1+\frac{3q(p-2)p}{3(p-2)p\frac{q+2n-2}{2n}+\frac{2\ell'}{n}}+\rho_{f,g}\left(\ell_f+\ell_g+\frac{6\ell'(p-2)(p-1)}{3(p-2)p\frac{q+2n-2}{2n}+\frac{2\ell'}{n}}\right)\\
&=n+1+\frac{3(2n+2+2\rho_{f,g}(\ell_f+\ell_g))}{3\frac{q+2n-2}{2n}+\frac{2\ell'}{n(p-2)p}}+\rho_{f,g}\left(\ell_f+\ell_g+\frac{6\ell'(p-2)(p-1)}{6(p-2)p+\frac{2\ell'}{n}}\right)\\
&\le n+1+\frac{3(2n+2)}{6+\frac{2\ell'}{n(p-2)p}}+\rho_{f,g}\left(\ell_f+\ell_g+\frac{6\ell'(p-2)(p-1)+2(\ell_f+\ell_g)}{6(p-2)p+\frac{2\ell'}{n}}\right)\\
&=2n+1+\frac{3(p-2)p-\ell'}{3(p-2)p+\ell'/n}+\rho_{f,g}\left(\ell_f+\ell_g+\ell'\right).
\end{align*}
This is a contradiction. Hence, this case does not happen.

Therefore, for each $I\in \mathcal {I}$, there is $J\in \mathcal {I}\setminus \{I\}$ such that  $\dfrac {h_I}{h_J}\in\C^*.$

Consider the torsion free abelian subgroup generated by the family 
$\{[h_1] ,\ldots, [h_{q}]\}$ of the abelian group $\mathcal {M^*}_m/ \C^*.$ 
Then the family $\{[h_1],\ldots,[h_{q}]\}$ has the property $P_{q,n+1}$. 
By Proposition \ref{new2}, there exist $q-2n\ge 2$ elements, without loss of generality 
we may assume that they are  $[h_1],[h_i]$  such that $[h_1]=[h_i].$ Then
$\dfrac {h_1}{h_i}=\lambda\in\C^*.$
Suppose that $\lambda\not\equiv 1.$ Since $\dfrac{h_1(z)}{h_i(z)}=1$ for each $z\in\bigcup_{\overset{k=2}{k\ne i}}^{q}f^{-1}(H_k)\setminus(f^{-1}(H_1)\cup f^{-1}(H_i)),$ 
it implies that  $\bigcup_{\overset{k=2}{k\ne i}}^{q}f^{-1}(H_k)=\emptyset$. Hence $\sum_{\overset{k=2}{k\ne i}}^{q}\nu_{H_k(f)}^{[n]}=\sum_{\overset{k=2}{k\ne i}}^{q}\nu_{H_k(g)}^{[n]}=0$. Then, by Lemma \ref{3.1}, we have
\begin{align*} 
q-2\le n+1+\rho_{f,g}(\ell_f+\ell_g).
\end{align*}
This is a contradiction. Thus, $\lambda\equiv 1$, i.e., $h_1\equiv h_i$. Hence $\nu_{H_1(f)}=\nu_{H_1(g)}$ and
$\nu_{H_i(f)}=\nu_{H_i(g)}$. By the assumption, we note that $2<i<q$.

Now we consider 
\begin{align*}
P_{1}&=H_1(f)H_{2}(g)-H_{2}(f)H_1(g)=\frac{H_1(f)}{H_i(f)}\left (H_i(f)H_{2}(f)-H_{2}(f)H_i(g)\right )\not\equiv 0.
\end{align*}
From this inequality, we easily see that
\begin{align}\label{4.7}
\nu_{P_1}\ge (\nu_{H_1(f)}+\nu_{H_1(f)}^{[1]})+\nu_{H_2(f)}^{[n]}+\sum_{k=3}^{q}\nu_{H_k(f)}^{[1]}
\end{align}
and similarly
\begin{align}\label{4.8}
\begin{split}
\nu_{P_q}&\ge (\nu_{H_1(f)}+\nu_{H_1(f)}^{[1]})+\nu_{H_q(f)}^{[n]}+\sum_{k=2}^{q-1}\nu_{H_k(f)}^{[1]},\\
\nu_{P_{i-1}}&\ge (\nu_{H_i(f)}+\nu_{H_i(f)}^{[1]})+\nu_{H_{i-1}(f)}^{[n]}+\sum_{\underset{k\ne i-1,i}{k=1}}^q\nu_{H_k(f)}^{[1]},\\
\nu_{P_{i}}&\ge (\nu_{H_i(f)}+\nu_{H_i(f)}^{[1]})+\nu_{H_{i+1}(f)}^{[n]}+\sum_{\underset{k\ne i,i+1}{k=1}}^q\nu_{H_k(f)}^{[1]}.\\
\end{split}
\end{align}
Then, similar as (\ref{new}), with the help of (\ref{4.7}) and (\ref{4.8}), we have
\begin{align}\label{5.6}
\nu_P(z) \ge\frac{q+2n-2}{2n}\sum_{k=1}^{q}\left (\nu_{H_k(f)}^{[n]}(z)+\nu_{H_k(g)}^{[n]}(z)\right )+2\sum_{k=1,i}\nu_{H_k(f)}^{[1]}.
\end{align}

Consider the simple graph $\mathcal H'$, where the set of vertice is $\{1,\ldots,q\}\setminus\{1,i\}$ and the set of edges consists of all pairs $\{u,v\}$ such that $\frac{H_i(u)}{H_j(v)}\not\equiv\frac{H_u(g)}{H_v(g)}$. Since $f\not\equiv g$, the degree of $\mathcal H'$ at every vertex is at least $q-2-n\ge\frac{q-2}{2}$. By Dirac's theorem, $\mathcal H'$ has a Hamiltonian cycle $j_1,\ldots,j_{q-2},j_1$. Therefore, 
$$P_{u}'=H_{j_u}(f) H_{\sigma'(u)}(g)-H_{j_u}(g) H_{\sigma'(u)}(f)\not\equiv 0,$$
where $\sigma'(u)=j_{u+1}$ for $u<q-2$ and $\sigma'(q-2)=j_1$. We easily see that 
$$\nu_{P'_u}(z)\geq \sum_{k=u,\sigma'(u)}\min \left\{\nu_{H_{j_u}(f)}(z), \nu_{H_{j_u}(g)}(z)\right\}+\sum_{\underset{k\ne u,\sigma'(u)}{k=1}}^{q-2}\nu^{[1]}_{H_{j_u}(f)}(z)+\sum_{k=1,i}\nu^{[1]}_{H_k(f)}(z)$$
for all $z$ outside the analytic subset $\bigcup_{1\le u<v\le q}f^{-1}(H_u\cap H_v)$, which is of codimension two. Let $P'=\prod_{u=1}^{q-2}P_u'$ and similar as (\ref{new}), we get
\begin{align*}
\nu_{P'}(z) &\ge\frac{q+2n-4}{2n}\sum_{\overset{k=2}{k\ne i}}^q\left (\nu_{H_k(f)}^{[n]}(z)+\nu_{H_k(g)}^{[n]}(z)\right )+(q-2)\sum_{k=1,i}\nu_{H_k(f)}^{[1]}(z)\\
&=\frac{q+2n-4}{2n}\sum_{k=1}^q\left (\nu_{H_k(f)}^{[n]}(z)+\nu_{H_k(g)}^{[n]}(z)\right )-(2n-2)\sum_{k=1,i}\nu_{H_k(f)}^{[1]}(z).
\end{align*}
It is clear that $P^{n-1}P'\in B(nq-2,0;f,g)$ and satisfying
$$ \nu_{P^{n-1}P'}\ge\frac{n(q+2n-2)-2}{2n}\sum_{k=1}^{q}\left (\nu_{H_k(f)}^{[n]}(z)+\nu_{H_k(g)}^{[n]}(z)\right ).$$
Then from Lemma \ref{3.1}, we have
\begin{align*}
q&\le n+1+\rho_{f,g}(\ell_f+\ell_g)+ \frac{2n(nq-2)}{n(q+2n-2)-2}\\
&\le n+1+\rho_{f,g}(\ell_f+\ell_g)+\frac{2n(2n^2+2n-2)}{4n^2-2}+\frac{4n^2\rho_{f,g}(\ell_f+\ell_g)}{4n^2-2}\\
&=2n+1+\frac{2n-1}{2n-1/n}+\rho_{f,g}\frac{4n^2-1}{2n^2-1}(\ell_f+\ell_g).
\end{align*}
This is a contradiction. 

Hence, we must have $f\equiv g$. The theorem is proved. 
\end{proof}

\section{algebraic dependence problem}
\begin{lemma}[{See  \cite[Lemma 3.1]{Q22}}]\label{5.1}
Let $f^1,f^2,\ldots,f^k$ be as in Theorem \ref{1.3} and $M=\B^m(R_0)$. Assume that each $f^u$ has a reduced representation $f^u=(f^u_{0}:\cdots :f^u_{n})$, $1\le u\le k$. Suppose that there exist integers $1\le i_1<i_2<\cdots <i_k\le q$ such that
$$ P:=\mathrm{det}\left(H_{i_j}(f^u)\right)_{1\le j,u\le k}\not\equiv 0.$$
Then we have
\begin{align*}
\nu_P (z)\ge\sum_{j=1}^k\left(\min_{1\le u\le k}\{\nu_{H_{i_j}(f^u)}(z)\}-\nu^{[1]}_{H_{i_j}(f^1)}(z)\right)+ (k-1)\sum_{i=1}^q\nu^{[1]}_{H_{i}(f^1)}(z),
\end{align*}
for every $z\in\B^m(R_0)$ outside an analytic set of codimension two.
\end{lemma}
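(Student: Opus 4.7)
The plan is to establish the claimed divisor inequality pointwise outside the analytic set $A := \bigcup_{1\le i < i' \le q}(f^1)^{-1}(H_i \cap H_{i'})$, which has dimension at most $m-2$ by hypothesis. Outside $A$, each point $z_0$ lies in at most one set of the form $(f^1)^{-1}(H_i)$. The assumption inherited from Theorem \ref{1.3} that $f^1,\ldots,f^k$ agree on $\bigcup_{u,i}(f^u)^{-1}(H_i)$ forces the preimages $(f^u)^{-1}(H_i)$ to coincide for all $u$, and at any such $z_0$ all $f^u(z_0)$ represent the same point $a\in\P^n(\C)$.

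The key tool is a local column reduction. At such a $z_0$, I would choose $l$ with $f^1_l(z_0)\ne 0$, set $\psi_u := f^u_l/f^1_l$ (holomorphic near $z_0$) for $u\ge 2$, and note that the vector $f^u-\psi_u f^1$ vanishes at $z_0$, hence $H_{i_j}(f^u)-\psi_u H_{i_j}(f^1)=H_{i_j}(f^u-\psi_u f^1)$ vanishes to order $\ge 1$ for every $j$. Replacing the $u$-th column $C_u$ of $M:=(H_{i_j}(f^u))_{j,u}$ by $C_u':=C_u-\psi_u C_1$ for $u\ge 2$ leaves $P=\det M$ unchanged, and the $(j,u)$-entry of $C_u'$ has order at least $\min\bigl(\nu_{H_{i_j}(f^u)},\nu_{H_{i_j}(f^1)}\bigr)(z_0)\ge \min_{u'}\nu_{H_{i_j}(f^{u'})}(z_0)$ as well as $\ge 1$ whenever $z_0\in\bigcup_i(f^1)^{-1}(H_i)$.

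Next I would split into cases according to where $z_0$ lies. If $z_0\in (f^1)^{-1}(H_i)$ with $i\notin\{i_1,\ldots,i_k\}$, then the reduced columns $C_2',\ldots,C_k'$ each vanish at $z_0$; expanding $\det$ as a signed sum of products over permutations, every term carries at least $k-1$ factors of order $\ge 1$, giving $\nu_P(z_0)\ge k-1$, which is exactly $(k-1)\nu^{[1]}_{H_i(f^1)}(z_0)$ and matches the required bound. If instead $z_0\in (f^1)^{-1}(H_{i_j})$ for the unique index $j$, set $m_j:=\min_u\nu_{H_{i_j}(f^u)}(z_0)\ge 1$. The entries in row $j$ of the reduced matrix all have order $\ge m_j$, while the entries $(j',1)$ with $j'\ne j$ are nonzero at $z_0$. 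A case split in the permutation expansion on whether $\sigma(j)=1$ or not shows each nonvanishing term has order at least $m_j+k-2$, which matches the required value $(m_j-1)+(k-1)$.

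The main obstacle is the combinatorial check for the case $z_0\in (f^1)^{-1}(H_{i_j})$: one must confirm that permutations with $\sigma(j)\ne 1$, which ``sacrifice'' one factor of vanishing at the entry $(\sigma^{-1}(1),1)=H_{i_{\sigma^{-1}(1)}}(f^1)$ (nonvanishing at $z_0$), still recover enough via the row-$j$ column-$\sigma(j)$ entry, whose order is $\ge m_j$ by the observation that both $H_{i_j}(f^u)$ and $\psi_u H_{i_j}(f^1)$ vanish to order $\ge m_j$ at $z_0$. Once this simultaneous row-$j$ and column-$u$ ($u\ge 2$) vanishing is tracked, the permutation analysis is routine and the remaining verifications reduce to standard bookkeeping of local orders.
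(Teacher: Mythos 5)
Your argument is correct, and it is essentially the same column-reduction proof as the one the paper relies on by citation to \cite{Q22}: at a point of some $(f^1)^{-1}(H_i)$ outside the codimension-two set, use the agreement of the maps to subtract $\psi_u$ times the first column from the $u$-th column and count vanishing orders in the permutation expansion, which yields exactly the contributions $(k-1)$ from every shared hyperplane and the extra $\min_u\nu_{H_{i_j}(f^u)}-1$ when the hyperplane is one of $H_{i_1},\ldots,H_{i_k}$. The only bookkeeping you leave implicit (and which is harmless, since the lemma permits discarding codimension-two sets) is enlarging the exceptional set by the indeterminacy loci of the $f^u$ so that the local functions $\psi_u=f^u_l/f^1_l$ are defined and nonvanishing.
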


\begin{proof}[Proof of Theorem \ref{1.3}]
As usual, we may suppose that $M=\B^m(\R_0)$. For each $1\le i\le q,$ we put $V_i:=( (f^1,H_i),(f^2,H_i),(f^3,H_i))$.  We write $V_i\cong V_j$ if $V_i\wedge V_j\equiv 0$, otherwise we write $V_i\not\cong V_j.$ 

Suppose that $f^1\wedge f^2\wedge f^3\not\equiv 0$. Without loss of generality, we may assume that
$$\underbrace{V_1\cong \cdots\cong V_{l_1}}_{\text { group } 1}\not\cong
\underbrace{V_{l_1+1}\cong\cdots\cong V_{l_2}}_{\text { group } 2}\not\equiv \underbrace{V_{l_2+1}\cong \cdots\cong V_{l_3}}_{\text { group } 3}\not\cong\cdots \not\cong\underbrace{V_{l_{s-1}}\cong\cdots\cong V_{l_{s}}}_{\text { group } s},$$
where $l_s=q.$ 
 For each $1\le i\le q$, we set
$$ \sigma (i)=\begin{cases}
i+n&\text{ if }i+n\le q,\\
i+n-q&\text{ if }i+n>q.
\end{cases} $$
Since each group has at most $n$ elements, $V_i$ and $V_{\sigma (i)}$ belong to two distinct groups, i.e., $V_i\wedge V_{\sigma (i)}\not\equiv 0$. Then, we may choose another index, denoted by $\gamma (i)$, such that 
$$V_i\wedge V_{\sigma (i)}\wedge V_{\gamma (i)}\not\equiv 0.$$
We set
$$ P_i:=\det\left(H_i(f^u),H_{\sigma (i)}(f^u),H_{\gamma (i)}(f^u);1\le u\le 3\right)\not\equiv 0.$$
Then, by Lemma \ref{5.1} we have
\begin{align*}
\nu_{P_i}&\ge  \sum_{j=i,\sigma(i)}\left(\min_{1\le u\le 3}\nu_{H_j(f^u)}-\nu^{[1]}_{H_j(f^1)}\right)+2\sum_{j=1}^q\nu^{[1]}_{H_j(f^1)}\\
&\ge \sum_{j=i,\sigma(i)}\left(\sum_{u=1}^3\nu^{[n]}_{H_j(f^u)}-(2n+1)\nu^{[1]}_{H_j(f^1)}\right)+2\sum_{j=1}^q\nu^{[1]}_{H_j(f^1)}.
\end{align*}
Summing-up both sides of the above inequality over all $1\le i\le q$, we have
$$\nu_{\prod_{i=1}^qP_i}\ge 2\sum_{u=1}^3\sum_{j=1}^q\nu^{[n]}_{H_j(f^u)}+(2q-4n-2)\sum_{j=1}^q\nu^{[1]}_{H_j(f^u)}\ge\frac{2q+2n-2}{3n}\sum_{u=1}^3\sum_{j=1}^q\nu^{[n]}_{H_j(f^u)}.$$
It is easy to see that $\prod_{i=1}^qP_i\in B(q,0;f^1,f^2,f^3).$
Then, by Lemma \ref{3.1} we have
$$ \frac{2q+2n-2}{3n}(q-n-1)-q\le 2\frac{2q+2n-2}{3n}\left(\sum_{u=1}^3\frac{1}{\rho_{f^u}}\right)^{-1}\sum_{u=1}^3\ell_{f^u},$$
i.e.,
$$ q\le n+1+\frac{3nq}{2q+2n-2}+2\left(\sum_{u=1}^3\frac{1}{\rho_{f^u}}\right)^{-1}\sum_{u=1}^3\ell_{f^u}.$$
This is a contradiction. Hence $f^1\wedge f^2\wedge f^3\equiv 0$. The theorem is proved.
\end{proof}

\begin{proof}[Proof of Theorem \ref{1.4}]

Denote by $\mathcal I$ the set of all $k$-tuples $I=(i_1,\ldots,i_k)\in \N^k$ with $1\le i_1<i_2<\cdots <i_k\le q$. Suppose contrarily that $f^1\times f^2\times\cdots\times f^k$ is not algebraically degenerate. Then for every $I=(i_1,\ldots,i_k)\in\mathcal I$,
$$ P_{I}:=\det \left (H_{i_j}(f^u)\right)_{1\le j,u\le k}\not\equiv 0. $$
By Lemma \ref{5.1}, we have
\begin{align*}
\nu_{P_I}&\ge\sum_{j=1}^k\left (\min_{1\le u\le k}\nu_{H_{i_j}(f^u)}-\nu^{[1]}_{H_{i_j}(f^1)}\right)+(k-1)\sum_{i=1}^q\nu^{[1]}_{H_i(f)}\\
&\ge\sum_{j=1}^k\left (\nu^{[n]}_{H_{i_j}(f^1)}-\nu^{[1]}_{H_{i_j}(f^1)}\right)+(k-1)\sum_{i=1}^q\nu^{[1]}_{H_i(f)}.
\end{align*}
Setting $P=\prod_{I\in\mathcal I}P_I$ and summing up both sides of the above inequality over all $I\in\mathcal I$, we get
\begin{align}\label{3.14}
\begin{split}
\nu_P&\ge\sharp\mathcal I\cdot\sum_{i=1}^q\left (\frac{k}{q}\nu^{[n]}_{H(f^1)}+\frac{((k-1)q-k)}{q}\nu^{[1]}_{H_{i_j}(f^1)}\right)\\
&=\sharp\mathcal I\cdot\left (\frac{1}{q}+\frac{((k-1)q-k)}{knq}\right)\sum_{u=1}^k\sum_{i=1}^q\nu^{[n]}_{H_i(f^u)}.
\end{split}
\end{align}
Applying Lemma \ref{3.1} for the function $P\in B(\sharp\mathcal I,0;f^1,\ldots,f^k)$, we get
$$\sum_{u=1}^k\frac{\sharp\mathcal I\cdot\left (\frac{1}{q}+\frac{((k-1)q-k)}{knq}\right)(q-n-1)-\sharp\mathcal I}{\rho_{f^u}}-2\sum_{u=1}^k\sharp\mathcal I\cdot \left (\frac{1}{q}+\frac{((k-1)q-k)}{knq}\right) \ell_{f^u}\le 0, $$
i.e.,
$$ q\le n+1+\frac{knq}{(k-1)q+k(n-1)}+2\left(\sum_{u=1}^k\frac{1}{\rho_{f^u}}\right)^{-1}\sum_{u=1}^k\ell_{f^u}.$$
This is a contradition. Therefore, $f^1\times\cdots\times f^k$ is algebraically degenerate. The theorem is proved.
\end{proof}

\noindent{\bf Acknowledgements.} This work was done during a stay of the author at the Vietnam Institute for Advanced Study in Mathematics (VIASM). He would like to thank the staff there, in particular the partially support of VIASM.  This research is funded by Vietnam National Foundation for Science and Technology Development (NAFOSTED) under grant number 101.02-2021.12.

\noindent
{\bf Disclosure statement.} No potential conflict of interest was reported by the author(s).

\vskip0.1cm
{\footnotesize 
\noindent
{\sc Si Duc Quang}\\
$^1$Department of Mathematics, Hanoi National University of Education,\\
136-Xuan Thuy, Cau Giay, Hanoi, Vienam.\\
$^2$Thang Long Institute of Mathematics and Applied Sciences\\
Nghiem Xuan Yem, Hoang Mai, Hanoi, Vietnam.\\
\textit{E-mail}: quangsd@hnue.edu.vn}
\end{document}